\DeclareMathAlphabet{\mathpzc}{OT1}{pzc}{m}{it}
\newtheorem{theorem}{Theorem}[section]
\newtheorem*{theorem*}{Theorem}
\newtheorem{proposition}[theorem]{Proposition}
\newtheorem{lemma}[theorem]{Lemma}
\newtheorem*{lemma*}{Lemma}
\newtheorem{corollary}[theorem]{Corollary}
\newtheorem*{conjecture*}{Conjecture}
\theoremstyle{definition}
\newtheorem{definition}[theorem]{Definition}
\newtheorem{example}[theorem]{Example}
\newtheorem{notation}[theorem]{Notation}
\theoremstyle{remark}
\newtheorem{remark}[theorem]{Remark}
\DeclareMathOperator{\grad}{grad}
\numberwithin{equation}{section}
\begin{document}
\title{ Symplectic Lefschetz fibrations on adjoint orbits}
\author{Elizabeth Gasparim, Lino Grama, and Luiz A. B. San Martin}
\address{Gasparim - Departamento de Matem\'aticas, Universidad Cat\'olica del Norte, Antofagasta, Chile\qquad \qquad \phantom{xxx}
  Grama and San Martin - Imecc -
Unicamp, Departamento de Matem\'{a}tica. Rua S\'{e}rgio Buarque de Holanda,
651, Cidade Universit\'{a}ria Zeferino Vaz. 13083-859 Campinas - SP, Brasil.\qquad \qquad \phantom{xxxxxxxxxxx}
E-mails: smartin@ime.unicamp.br, etgasparim@gmail.com, linograma@gmail.com.}

\begin{abstract}
We prove that adjoint orbits of semisimple Lie algebras have the structure
of symplectic Lefschetz fibrations. 
 We  describe the topology of the regular and singular
fibres, in particular we calculate their middle Betti numbers.
\end{abstract}

\thanks{The authors acknowledge support of Fapesp grant 2012/10179-5, Fapesp
grant 2012/07482-8 , Fapesp grant 2012/18780-0 and CNPq grant 303755/2009-1.}
\maketitle
\tableofcontents

\section{Motivation and statements of  results}

%
%

Our goal in this work is to construct  symplectic Lefschetz fibrations in  dimensions higher than 4. 
Somewhat surprinsingly,  we found effective tools to
 construct such fibrations in Lie theory.
 
The  literature about SLFs in 4 real dimensions is vast.
A celebrated result of Donaldson \cite{Do}
shows that  after blowing up finitely many points, every symplectic manifold 4-manifold
admits a Lefschetz fibration.
Conversely,  the existence of a topological
Lefschetz fibration on a  4 dimensional symplectic manifold guaranties the existence of an
SLF  whenever the fibres have genus at least 2, see \cite{GoS}.
Moreover, 
Amor\'os--Bogomolov--Katzarkov--Pantev proved existence  SLFs  in 4D with arbitrary fundamental group
\cite{ABKP}.
 In general, it is possible to construct Lefschetz fibrations starting up
with a Lefschetz pencil and then blowing up its base locus (see \cite{Se}, 
\cite{Go}). However, in such cases one needs to fix the indefiniteness of
the symplectic form over the exceptional locus by glueing in a correction,
and this makes it rather difficult to explicitly find Lagrangean vanishing cycles.
 Direct constructions of Lefschetz fibrations in higher dimensions
were by and large lacking in the literature.

 Our construction does not make use of
Lefschetz pencils, we construct our symplectic Lefschetz fibrations directly
using  height functions that come naturally
from Lie theory.
We prove that adjoint orbits of semisimple Lie algebras have the structure
of symplectic Lefschetz fibrations. We then describe the topology of the
fibres, in particular calculating their middle Betti numbers.  Our main results are: \vspace{3mm}

\noindent \textbf{Theorem \ref{teofibracao}} Let $\mathfrak{h}$ be the
Cartan subalgebra of a complex semisimple Lie algebra. Given $H_{0}\in 
\mathfrak{h}$ and $H\in \mathfrak{h}_{\mathbb{R}}$ with $H$ a regular
element. The \textit{height function} \ $f_{H}\colon\mathcal{O}\left(
H_{0}\right) \rightarrow \mathbb{C}$ defined by 
\begin{equation*}
f_{H}\left( x\right) =\langle H,x\rangle \qquad x\in \mathcal{O}\left(
H_{0}\right) 
\end{equation*}%
has a finite number (= $|\mathcal{W}|/|\mathcal{W}_{H_{0}}|$) of isolated
singularities and gives $\mathcal{O}\left(H_{0}\right) $ the structure of a
symplectic Lefschetz fibration. \vspace{3mm}

The precise meaning of this statement is explained in section \ref{LF}, and
%
In section \ref{reg} we describe the topology of the regular fibre, and in
section \ref{sing} we describe the singular fibre, obtaining: \vspace{3mm}

\noindent \textbf{Corollary \ref{cor.homology}} The homology of a regular
level $L\left( \xi \right) $ coincides with that of  $\mathbb{F}_{H_0}\setminus \mathcal{W}\cdot H_{0}$.  In particular, the middle  Betti number
of $L\left( \xi \right)$ equals $k-1$, where $k$ is the number of
singularities of the  fibration $f_H$ (and equals the number of elements in
the orbit $\mathcal{W}\cdot H_{0}$).

\vspace{3mm}

\noindent \textbf{Corollary \ref{cor.sing}}
The homology of a singular level $L\left( w H_0\right) $, $w \in \mathcal{W}$ 
coincides with that of \,  
$$\mathbb{F}_{H_0} \setminus  \{uH_0 \in \mathcal{W}\cdot H_{0}: u\neq w\}\text{.}$$
 In particular, the middle  Betti number of $L\left( wH_0 \right)$
equals $k-2$, where $k$ is the number of singularities of the  fibration $f_H.$

%
%
%

\vspace{3mm}

\noindent\textbf{Acknowledgments} 
We thank Ron Donagi, Ludmil Katzarkov and Tony Pantev for enlightening
discussions.

\section{Lefschetz Fibrations on adjoint orbits}

Let $\mathfrak{g}$ be a complex semisimple Lie algebra and $G$ a connected
Lie group with Lie algebra $\mathfrak{g}$ (for instance $G$ could be $%
\mathrm{Aut}_{0}\left( \mathfrak{g}\right) $, the connected component of the
identity of the automorphism group of $G$).

The Cartan--Killing form of $\mathfrak{g}$, $\langle X,Y\rangle =\mathrm{tr}%
\left( \mathrm{ad}\left( X\right) \mathrm{ad}\left( Y\right) \right) \in 
\mathbb{C}$,  is symmetric and nondegenerate. Moreover,  $\langle \cdot
,\cdot \rangle $ is invariant by the adjoint representation, that is 
\begin{equation*}
\langle \lbrack X,Y],Z\rangle =-\langle Y,[X,Z]\rangle \qquad X,Y,Z\in 
\mathfrak{g}. 
\end{equation*}

Fix a Cartan subalgebra $\mathfrak{h}\subset \mathfrak{g}$ and a compact
real form  $\mathfrak{u}$ of $\mathfrak{g}$. Associated to these subalgebras
there are the subgroups $T=\langle \exp \mathfrak{h}\rangle =\exp \mathfrak{h%
}$ and $U=\langle \exp \mathfrak{u} \rangle =\exp \mathfrak{u}$. Denote by $%
\tau$ the conjugation associated to  $\mathfrak{u}$, defined by $\tau \left(
X\right) =X$ if $X\in \mathfrak{u}$ and $\tau \left( Y\right) =-Y$ if $Y\in i%
\mathfrak{u}$. Hence if $Z=X+iY\in \mathfrak{g}$ with $X,Y\in \mathfrak{u}$
then $\tau \left( X+iY\right) =X-iY$. In this case, the sesquilinear form $%
\mathcal{H}_{\tau }:\mathfrak{g}\times \mathfrak{g}\rightarrow \mathbb{C}$ 
defined by 
\begin{equation}  \label{hermitian}
\mathcal{H}_{\tau }\left( X,Y\right) =-\langle X,\tau Y\rangle
\end{equation}
is a Hermitian form on $\mathfrak{g}$ (see \cite[lemma 12.17]{amalglie}).

A root of $\mathfrak{h}$ is a linear functional $\alpha :\mathfrak{h}%
\rightarrow \mathbb{C}$, $\alpha \neq 0$, such that the space of roots 
\begin{equation*}
\mathfrak{g}_{\alpha }=\{X \in \mathfrak{g}:\forall H\in \mathfrak{h}%
,~[H,X]=\alpha \left( H\right) X\}\neq \{0\}. 
\end{equation*}%
The set of all roots is denoted by $\Pi $. The decomposition  $\mathfrak{g}$
in eigenspaces of $\mathrm{ad}\left( H\right) $, $H\in \mathfrak{h}$, is
given by 
\begin{equation*}
\mathfrak{g}=\mathfrak{h}\oplus \sum_{\alpha \in \Pi }\mathfrak{g}_{\alpha
}. 
\end{equation*}%
An element $H\in \mathfrak{h}$ is \textbf{regular} if $\alpha \left(
H\right) \neq 0$ for all $\alpha \in \Pi $.

The restriction of the Cartan--Killing form to $\mathfrak{h}$ is
nondegenerate so we can define, for each $\alpha \in \Pi $, $H_{\alpha }\in 
\mathfrak{h}$ by $\alpha \left( \cdot \right) =\langle H_{\alpha },\cdot
\rangle $. The real subspace generated by $H_{\alpha }$, $\alpha \in \Pi $,
is denoted by $\mathfrak{h}_{\mathbb{R}}$. In the canonical construction of $%
\mathfrak{u}$ we have $\mathfrak{h}_{\mathbb{R}}\subset i\mathfrak{u}$.

The Weyl group $\mathcal{W}$ is given by $\mathcal{W}=\mathrm{Nor}_{G}\left( 
\mathfrak{h}\right) /\mathrm{Cent}_{G}\left( \mathfrak{h}\right) $
(normaliser modulo centraliser) or, equivalently, the group generated by
reflexions with respect to the roots. $\mathcal{W}$ is finite.

The adjoint representation of $G$ in $\mathfrak{g}$ is denoted by $\mathrm{Ad%
}\left( g\right) X$, $g\in G$ and $X\in \mathfrak{g}$, or simply by $g\cdot X
$. An adjoint orbit is given by 
\begin{equation*}
\mathcal{O}\left( X\right) =G\cdot X=\{g\cdot X\in \mathfrak{g}:g\in G\}. 
\end{equation*}%
Such an orbit can be identified with the quotient space $G/\mathrm{Cent}%
_{G}\left( X\right) $ where $\mathrm{Cent}_{G}\left( X\right) =\{g\in
G:g\cdot X=X\}$ is the \textit{centraliser} of $X$ in $G$. If $H\in 
\mathfrak{h}$ is regular then $\mathrm{Cent}_{G}\left( H\right) =T=\exp 
\mathfrak{h}$. The tangent space $T_{x}\mathcal{O}\left( X\right) $ to the
orbit $\mathcal{O}\left( X\right) $ at $x$ is given by 
\begin{eqnarray*}
T_{x}\mathcal{O}\left( X\right) &=&\Im\mathrm{ad}\left( x\right)
=\{[x,A]:A\in \mathfrak{g}\} \\
&=&\{[A,x]:A\in \mathfrak{g}\}
\end{eqnarray*}%
since $[A,x]=\frac{d}{dt}\left( e^{t\mathrm{ad}\left( A\right) }x\right)
_{\left\vert t=0\right. }$ and $e^{t\mathrm{ad}\left( A\right) }=\mathrm{Ad}%
\left( e^{tA}\right) $.

Note that, because  $\mathfrak{g}$ is a complex Lie algebra, the tangent
spaces $T_{x}\mathcal{O}\left( X\right) $ to $\mathcal{O}\left( X\right) $
are complex subspaces of $\mathfrak{g}$, since if $[A,x]$ is a tangent
vector then $i[A,x]=[iA,x]$ is also a tangent vector. This implies that each
adjoint orbit  $\mathcal{O}\left( X\right) $ is a complex manifold, as it is
endowed with an almost complex structure  (multiplication by $i$ in each
tangent space) which is integrable, simply because this almost complex
structure is the restriction of a complex structure on $\mathfrak{g}$ (the
Nijenhuis tensor vanishes).

\vspace{12pt}%

\noindent%

\begin{example}
\label{ex-sl2}When $\mathfrak{g}=\mathfrak{sl}\left( n,\mathbb{C}\right) $
the data just described is:

\begin{enumerate}
\item $\langle \cdot ,\cdot \rangle $ is a (constant) multiple of the form $%
\mathrm{tr}\left( XY\right) $;

\item A canonical choice of $\mathfrak{h}$ is the subalgebra of diagonal
matrices;

\item with this choice of $\mathfrak{h}$ the roots are the linear
functionals  $\alpha _{ij}\left( \mathrm{diag}\{a_{1},\ldots ,a_{n}\}\right)
=a_{i}-a_{j}$, $i\neq j$, with $\mathfrak{g}_{\alpha _{ij}}$ the subspace
generated by the basis element given by the matrix $E_{ij}$ (with $1$ in the 
$i,j$ entry and zeros elsewhere);

\item $\mathfrak{u}=\mathfrak{su}\left( n\right) $, the (real) algebra of
anti-Hermitian matrices. In this case $\tau \left( Z\right) =-\overline{Z}%
^{T}$, $Z\in \mathfrak{sl}\left( n,\mathbb{C}\right) $ and the associated
Hermitian form is a multiple of $\mathcal{H}_{\tau }\left( X,Y\right) =%
\mathrm{tr}\left( X\overline{Y}^{T}\right) $;

\item $H\in \mathfrak{h}$ is regular if and only if its eigenvalues are all
distinct;

\item $\mathcal{W}$ is the permutation group of $n$ elements, which acts
upon $\mathfrak{h}$ by permuting its diagonal entries.

\item If $H\in \mathfrak{h}$ then $\mathcal{O}\left( H\right) $ is the set
of diagonalizable matrices that have the same eigenvalues as $H$.
\end{enumerate}
\end{example}

\subsection{Main Theorem}
\label{LF}

The Lefschetz fibration on an adjoint orbit is the following:

\begin{theorem}
\label{teofibracao}Given $H_{0}\in \mathfrak{h}$ and $H\in \mathfrak{h}_{%
\mathbb{R}}$ with $H$ a regular element. Then, the \textquotedblleft height
function \textquotedblright\ $f_{H}:\mathcal{O}\left( H_{0}\right)
\rightarrow \mathbb{C}$ defined by 
\begin{equation*}
f_{H}\left( x\right) =\langle H,x\rangle \qquad x\in \mathcal{O}\left(
H_{0}\right) 
\end{equation*}%
has a finite number (= $|\mathcal{W}|/|\mathcal{W}_{H_{0}}|$) of isolated
singularities and defines a symplectic Lefschetz fibration, thus the
following properties hold:

\begin{enumerate}
\item The singularities are nondegenerate (Hessian non degenerate).

\item If $c_{1},c_{2}\in \mathbb{C}$ are regular values then the level
manifolds $f_{H}^{-1}\left( c_{1}\right) $ and $f_{H}^{-1}\left(
c_{2}\right) $ are diffeomorphic.

\item There exists a symplectic form $\Omega $ in $\mathcal{O}\left(
H_{0}\right) $ such that if $c\in \mathbb{C}$ is a regular value then the
level manifold $f_{H}^{-1}\left( c\right) $ is symplectic, that is, the
restriction of \, $\Omega $ to $f_{H}^{-1}\left( c\right) $ is a symplectic
(nondegenerate) form.

\item If $c\in \mathbb{C}$ is a singular value, then $f_{H}^{-1}\left(
c\right) $ is a union of  affine subspaces (contained in $\mathcal{O}\left(
H_{0}\right) $). These subspaces are symplectic with respect to the form \, $%
\Omega $ from the previous item.
\end{enumerate}
\end{theorem}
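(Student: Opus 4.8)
The plan is to locate the singular set first and then dispatch the four items essentially independently. For $x\in\mathcal O(H_0)$ and a tangent vector $[A,x]\in T_x\mathcal O(H_0)$, invariance of the Cartan--Killing form gives $df_H|_x([A,x])=\langle H,[A,x]\rangle=\langle x,[H,A]\rangle$. Hence $df_H|_x$ vanishes if and only if $x$ is orthogonal to $[H,\mathfrak g]=\operatorname{im}\operatorname{ad}(H)=\sum_{\alpha\in\Pi}\mathfrak g_\alpha$ — here one uses that $H$ is regular — that is, if and only if $x\in\mathfrak h$. So the critical set is $\mathcal O(H_0)\cap\mathfrak h$, which by the standard fact that two elements of $\mathfrak h$ are $G$-conjugate iff they are $\mathcal W$-conjugate equals the Weyl orbit $\mathcal W\cdot H_0$; this is finite with $|\mathcal W|/|\mathcal W_{H_0}|$ elements, where $\mathcal W_{H_0}$ is the stabiliser, and in particular the singularities are isolated.

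For item (1), fix a critical point $c=wH_0\in\mathfrak h$ and set $\mathfrak m_c=\bigoplus_{\alpha(c)\neq 0}\mathfrak g_\alpha$, a complement in $\mathfrak g$ to the centraliser of $c$; then $Z\mapsto\operatorname{Ad}(e^{Z})c=c+[Z,c]+\tfrac12[Z,[Z,c]]+O(Z^3)$ is a holomorphic chart of $\mathcal O(H_0)$ near $c$. Writing $Z=\sum Z_\alpha$ and using $[Z_\alpha,c]=-\alpha(c)Z_\alpha$ and $[Z_\alpha,Z_{-\alpha}]=\langle Z_\alpha,Z_{-\alpha}\rangle H_\alpha$, the linear term of $f_H$ in these coordinates vanishes (recovering that $c$ is critical) and the quadratic term equals $\tfrac12\sum_{\alpha(c)\neq 0}\alpha(H)\,\alpha(c)\,\langle Z_\alpha,Z_{-\alpha}\rangle$. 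Since $\langle\,\cdot\,,\,\cdot\,\rangle$ pairs $\mathfrak g_\alpha$ with $\mathfrak g_{-\alpha}$ nondegenerately and the scalars $\alpha(c)$ (by the choice of $\mathfrak m_c$) and $\alpha(H)$ ($H$ regular) are nonzero, this holomorphic Hessian is nondegenerate, which is item (1).

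For the symplectic form and items (3)--(4): since $H_0$ is semisimple, $\mathcal O(H_0)$ is closed in $\mathfrak g$, and its tangent spaces are complex subspaces of $\mathfrak g$, so $\mathcal O(H_0)$ is a complex submanifold. The Hermitian form $\mathcal H_\tau$ of \eqref{hermitian} is positive definite (the Killing form is negative definite on the compact real form) and compatible with multiplication by $i$, so $\Omega:=(\operatorname{Im}\mathcal H_\tau)|_{\mathcal O(H_0)}$ is a Kähler form: it is closed as the restriction of the constant-coefficient form $\operatorname{Im}\mathcal H_\tau$ on $\mathfrak g$, and nondegenerate on each complex tangent space. Now $f_H$ is the restriction of the $\mathbb C$-linear map $\langle H,\,\cdot\,\rangle$, hence holomorphic, so for a regular value $c$ the fibre $f_H^{-1}(c)$ is a complex submanifold of the Kähler manifold $(\mathcal O(H_0),\Omega)$ and therefore symplectic, giving (3). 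For (4), the affine subspaces sitting inside a singular fibre are complex affine subvarieties of $\mathfrak g$ contained in $\mathcal O(H_0)$, hence complex submanifolds, hence $\Omega$-symplectic by the same argument.

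The remaining item (2) is where the real work lies. The complement of the $k$ critical values in $\mathbb C$ is connected, so it is enough to show $f_H$ is a locally trivial fibration over it, and since $f_H$ is a holomorphic submersion there one wants to invoke Ehresmann's theorem — but $f_H$ is \emph{not} proper, as the orbit and its fibres are non-compact. I would instead use symplectic parallel transport along paths in the base via the horizontal distribution $\ker(df_H)^{\perp_\Omega}$; the hard part, and the step I expect to be the genuine obstacle, is showing that the horizontal lift of a bounded path in the base is a \emph{complete} vector field, since the naive normalisation fails ($|df_H|$ can decay at infinity along a fibre). Resolving this requires controlling the geometry of $\mathcal O(H_0)$ at infinity — using that it is a closed subvariety of $\mathfrak g$ and $f_H$ is linear — to manufacture a complete horizontal vector field covering the chosen base direction, or alternatively appealing to the theory of locally trivial fibrations of polynomial maps on smooth affine varieties once one checks that $f_H$ has no atypical values at infinity.
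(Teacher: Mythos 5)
You correctly locate the critical set and your computation for item (1) is right: the Taylor expansion of $\operatorname{Ad}\left(e^{Z}\right)c$ in $Z\in\bigoplus_{\alpha(c)\neq 0}\mathfrak{g}_{\alpha}$ gives the holomorphic Hessian $\tfrac12\sum\alpha(H)\alpha(c)\langle Z_{\alpha},Z_{-\alpha}\rangle$, which is nondegenerate since $\alpha(H),\alpha(c)\neq 0$ and the Killing form pairs $\mathfrak{g}_{\alpha}$ with $\mathfrak{g}_{-\alpha}$ perfectly. This is a more explicit route than the paper's, which works directly with the second-derivative formula $\langle[x_{0},[H,B]],A\rangle$ for $A,B\in\operatorname{im}\operatorname{ad}(x_{0})$ and identifies it abstractly as $\mathfrak{B}(Pu,v)$ with $\mathfrak{B}$ nondegenerate and $P$ invertible; both arguments are fine, and yours is arguably cleaner. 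Your treatment of item (3) — taking $\Omega=\operatorname{Im}\mathcal{H}_{\tau}$, closed as a constant-coefficient form and nondegenerate on every complex subspace, so that regular fibres (complex submanifolds) are symplectic — is exactly the paper's.

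Item (4), however, has a gap: you assume the affine subspaces exist and then observe they would be $\Omega$-symplectic, but the substantive content of the statement is the \emph{existence and identification} of those subspaces. The paper shows that for each singularity $wH_{0}$ the affine subspace $wH_{0}+\mathfrak{n}^{\pm}(wH_{0})=\operatorname{Ad}\left(N^{\pm}(wH_{0})\right)(wH_{0})$, where $\mathfrak{n}^{\pm}(wH_{0})=\sum_{\alpha\in\pm\Pi(wH_{0})}\mathfrak{g}_{\alpha}$, lies inside the singular fibre $f_{H}^{-1}\left(\langle H,wH_{0}\rangle\right)$, using $\langle H,X\rangle=0$ for $X\in\mathfrak{n}^{\pm}$. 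You need this or something equivalent before the symplecticity argument has anything to apply to.

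The serious gap is item (2), and you have flagged it yourself. Your diagnosis is accurate: $f_{H}$ is not proper, Ehresmann does not apply directly, and one must control the geometry of $\mathcal{O}(H_{0})$ at infinity; symplectic parallel transport or the theory of polynomial fibrations without atypical values at infinity are both plausible routes. But a diagnosis is not a proof, and carrying out the completeness argument is in fact the bulk of the paper's work. The paper defines the explicit transversal vector field $Z(x)=\left\Vert[x,H]\right\Vert^{-2}[x,[\tau x,H]]$, which satisfies $df_{H}(Z)\equiv 1$, and then proves a chain of estimates: a bound on $\left\Vert dZ_{x}\right\Vert$ in terms of $\left\Vert x\right\Vert/\left\Vert[x,H]\right\Vert^{2}$, a linear-algebra lemma on eigenvalues of $D_{n}+X_{n}$ with $D_{n}$ diagonalizable blowing up and $X_{n}\to 0$, and — the crucial step — a lemma asserting that the orbit is not asymptotic to $\mathfrak{h}$, i.e. on the $\varepsilon$-complement $O_{\varepsilon}$ of the critical set both $\left\Vert p(x)\right\Vert>\delta$ and $\left\Vert x-p(x)\right\Vert/\left\Vert p(x)\right\Vert<\Gamma_{\varepsilon}$, where $p$ is the projection onto $\sum_{\alpha}\mathfrak{g}_{\alpha}$. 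This is exactly what makes $Z$ and all $e^{i\theta}Z$ bounded and Lipschitz on $O_{\varepsilon}$, so their flows exist for a uniform time and carry $f_{H}^{-1}(c)$ diffeomorphically onto $f_{H}^{-1}\left(c+te^{i\theta}\right)$. Without something equivalent to this analysis (or a verified appeal to the non-existence of atypical fibres for this specific affine morphism), your item (2) is unproven.
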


The proof will be carried out in several steps.

\begin{remark}
\label{potential}

The height function $f_{H}$ defined by an element $H\in \mathfrak{h}_{%
\mathbb{R}}$ is extensively used in the study of the geometry of flag
manifolds. This is due to the fact that it is a Morse--Bott function in
general, which is Morse if $H$ is regular. These height functions make the
link between Morse theory and the algebraic theory of Bruhat decompositions.
This is because the gradient $\grad f_{H}$ of $f_{H}$, with respect to the
so called Borel metric is precisely the vector field $\widetilde{H}$ induced
by $H$ on a flag manifold (see Duistermaat--Kolk--Varadarajan \cite{dkv}).
The unstable manifolds of $\grad f_{H}=\widetilde{H}$ are the components of
the Bruhat decomposition if $H$ is regular. For applications of these height
functions to the geometry of flag manifolds see Kocherlakota \cite{koch},
regarding the Morse homology, and the extensive literature on the
\textquotedblleft convexity theorems\textquotedblright\ started with Kostant 
\cite{kos}, Atiyah \cite{at} and Guillemin--Sternberg \cite{gs}.
\end{remark}

\subsection{Singularities of the fibration}

First of all, if $A\in \mathfrak{g}$ and $x\in \mathcal{O}\left(
H_{0}\right) $ then $[A,x]$ is a vector tangent to $\mathcal{O}\left(
H_{0}\right) $ at $x$ and the differential of $f_{H}$ is given by 
\begin{equation}
\left( df_{H}\right) _{x}\left( [A,x]\right) =\frac{d}{dt}\langle H,e^{t%
\mathrm{ad}\left( A\right) }x\rangle _{\left\vert t=0\right. }=\langle
H,[A,x]\rangle =\langle \lbrack x,H],A\rangle .  \label{forderivfaga}
\end{equation}%
From this expression it follows that  $f_{H}$ is a holomorphic function with
respect to the complex structure of $\mathcal{O}\left( H_{0}\right) $.
Indeed, 
\begin{equation*}
\left( df_{H}\right) _{x}\left( i[A,x]\right) =\left( df_{H}\right)
_{x}\left( [iA,x]\right) =\langle \lbrack x,H],iA\rangle =i\langle \lbrack
x,H],A\rangle =i\left( df_{H}\right) _{x}\left( [A,x]\right) . 
\end{equation*}%
Being a holomorphic function, the rank of $f_{H}$ at $x\in \mathcal{O}\left(
H_{0}\right) $ (regarded as a map taking values in $\mathbb{R}^{2}\approx 
\mathbb{C}$) is either $0$ or $2$, given that if $\left( df_{H}\right)
_{x}\left( [A,x]\right) \neq 0$ then $i\left( df_{H}\right) _{x}\left(
[A,x]\right) \neq 0$ and these two derivatives generate $\mathbb{R}%
^{2}\approx \mathbb{C}$. In particular, this means that  $x\in \mathcal{O}%
\left( H_{0}\right) $ is a singular point of $f_{H}$ if and only if $\left(
df_{H}\right) _{x}=0$.

Therefore, by expression (\ref{forderivfaga}) for the differential of $f_{H}$%
, it follows that $x$ is a singularity, that is, $\left( df_{H}\right)
_{x}\left( [A,x]\right) =0$ for all $A\in \mathfrak{g}$ if and only if  $%
[x,H]=0$. This allows us to identify the singular points.

\begin{proposition}
\label{singularities} $x$ is a singular point for $f_{H}$ if and only if $%
x\in \mathcal{O}\left( H_{0}\right) \cap \mathfrak{h}=\mathcal{W}\cdot H_{0}$%
, where $\mathcal{W}$ is the Weyl group. (At this point the hypothesis that $%
H$ is regular is used.)
\end{proposition}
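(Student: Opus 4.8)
The plan is to unwind the already-established criterion, namely that $x$ is singular for $f_H$ if and only if $[x,H]=0$, and to translate the equation $[x,H]=0$ into a statement about the location of $x$ inside $\mathfrak{g}$. First I would observe that $x\in\mathcal{O}(H_0)$ is, by definition, of the form $x=g\cdot H_0$ for some $g\in G$; since $H_0\in\mathfrak{h}$ is assumed to be in the Cartan subalgebra, every element of $\mathcal{O}(H_0)$ is a semisimple element of $\mathfrak{g}$, and so $x$ is $\mathrm{ad}$-diagonalisable. The condition $[x,H]=0$ says that $x$ lies in the centraliser $\mathfrak{z}_{\mathfrak{g}}(H)=\{Y\in\mathfrak{g}:[H,Y]=0\}$. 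Here is exactly where regularity of $H$ enters: because $\alpha(H)\neq0$ for every root $\alpha\in\Pi$, the eigenspace decomposition $\mathfrak{g}=\mathfrak{h}\oplus\sum_{\alpha\in\Pi}\mathfrak{g}_\alpha$ shows that $\mathrm{ad}(H)$ has kernel exactly $\mathfrak{h}$, i.e. $\mathfrak{z}_{\mathfrak{g}}(H)=\mathfrak{h}$. Hence $[x,H]=0$ forces $x\in\mathfrak{h}$, so the set of singular points is precisely $\mathcal{O}(H_0)\cap\mathfrak{h}$.

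Next I would identify $\mathcal{O}(H_0)\cap\mathfrak{h}$ with the Weyl group orbit $\mathcal{W}\cdot H_0$. The inclusion $\mathcal{W}\cdot H_0\subseteq\mathcal{O}(H_0)\cap\mathfrak{h}$ is immediate: the Weyl group is $\mathrm{Nor}_G(\mathfrak{h})/\mathrm{Cent}_G(\mathfrak{h})$, so each $w\in\mathcal{W}$ is represented by some $g\in G$ with $g\cdot\mathfrak{h}=\mathfrak{h}$, whence $w\cdot H_0=g\cdot H_0\in\mathcal{O}(H_0)\cap\mathfrak{h}$. For the reverse inclusion, suppose $y\in\mathcal{O}(H_0)\cap\mathfrak{h}$, say $y=g\cdot H_0$. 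Then $\mathfrak{h}$ and $g\cdot\mathfrak{h}=g\,\mathrm{Cent}_{\mathfrak{g}}(H_0)\,g^{-1}$-type considerations show that both $\mathfrak{h}$ and $g^{-1}\cdot\mathfrak{h}$ are Cartan subalgebras containing $H_0$; one then invokes the standard conjugacy theorem for Cartan subalgebras of the centraliser $\mathfrak{z}_{\mathfrak{g}}(H_0)$ (a reductive algebra) to produce an element $h\in\mathrm{Cent}_G(H_0)$ with $h g^{-1}\cdot\mathfrak{h}=\mathfrak{h}$; then $gh^{-1}\in\mathrm{Nor}_G(\mathfrak{h})$ and $(gh^{-1})\cdot H_0=g\cdot H_0=y$, so $y$ is in the $\mathcal{W}$-orbit of $H_0$. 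This is the classical fact that two elements of $\mathfrak{h}$ are $G$-conjugate if and only if they are $\mathcal{W}$-conjugate.

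Finally, for the count $|\mathcal{W}|/|\mathcal{W}_{H_0}|$ appearing in Theorem \ref{teofibracao}, I would note that $\mathcal{W}$ acts on $\mathfrak{h}$ and the stabiliser of $H_0$ is by definition $\mathcal{W}_{H_0}$, so the orbit-stabiliser theorem gives $|\mathcal{W}\cdot H_0|=|\mathcal{W}|/|\mathcal{W}_{H_0}|$; this is a remark rather than part of Proposition \ref{singularities} itself, but it is worth recording in passing. The main obstacle in the argument is the reverse inclusion $\mathcal{O}(H_0)\cap\mathfrak{h}\subseteq\mathcal{W}\cdot H_0$ when $H_0$ is \emph{not} regular, since then one must work inside the reductive centraliser $\mathfrak{z}_{\mathfrak{g}}(H_0)$ and use conjugacy of Cartan subalgebras there rather than appealing directly to the simply transitive action of $\mathcal{W}$ on Weyl chambers; if one only needs the regular-$H_0$ case the argument collapses to the textbook statement. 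The first inclusion and the reduction "$x$ singular $\iff x\in\mathfrak{h}$" are routine given the regularity of $H$.
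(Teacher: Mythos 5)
Your proposal is correct and follows the same route as the paper: reduce singularity of $x$ to $[x,H]=0$ via the formula for $df_H$, use regularity of $H$ to conclude that $\mathfrak{z}_{\mathfrak{g}}(H)=\mathfrak{h}$ and hence that the singular set is $\mathcal{O}(H_0)\cap\mathfrak{h}$, and identify this with $\mathcal{W}\cdot H_0$. The only difference is that you spell out the classical identification $\mathcal{O}(H_0)\cap\mathfrak{h}=\mathcal{W}\cdot H_0$ (via conjugacy of Cartan subalgebras inside the reductive centraliser $\mathfrak{z}_{\mathfrak{g}}(H_0)$), which the paper simply asserts as known.
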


\begin{proof}
As observed, $x$ is a singularity if and only if $[x,H]=0$. But, as $H$ is
regular its centralizer is the  Cartan subalgebra $\mathfrak{h}$ itself. It
follows that the singularity set is  $\mathcal{O}\left( H_{0}\right) \cap 
\mathfrak{h}$. This set is exactly the orbit of $H_{0}$ by the action of $%
\mathcal{W}$.
\end{proof}

Since $\mathcal{W}$ is finite we obtain the following corollary.

\begin{corollary}
The set of singularities of $f_{H}$ is finite.
\end{corollary}

To obtain the Hessian at a singularity $x_{0}\in \mathcal{O}\left(
H_{0}\right) \cap \mathfrak{h}$, take $B\in \mathfrak{g}$. Then the second
derivative at $x\in \mathcal{O}\left( H_{0}\right) $ calculated at  $[A,x] $
and $[B,x]$ is given by 
\begin{eqnarray*}
\frac{d}{dt}\langle \lbrack e^{t\mathrm{ad}\left( B\right) }x,H],A\rangle
_{\left\vert t=0\right. } &=&\langle \lbrack B,x],H],A\rangle \\
&=&\langle \lbrack \lbrack B,H],x],A\rangle +\langle \lbrack
B,[x,H]],A\rangle .
\end{eqnarray*}%
In particular, if $x_{0}$ is a singularity then $[x_{0},H]=0$ and the second
derivative becomes 
\begin{equation}
\langle \lbrack \lbrack B,H],x_{0}],A\rangle =\langle \lbrack
x_{0},[H,B]],A\rangle .  \label{forhessiano}
\end{equation}

\begin{proposition}
The second term of (\ref{forhessiano}) defines a symmetric bilinear form
whose restriction to the tangent space $T_{x_{0}}\mathcal{O}\left(
H_{0}\right) $ at $x_{0}\in \mathfrak{h}$ is nondegenerate.
\end{proposition}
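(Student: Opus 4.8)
The plan is to analyze the bilinear form
\[
B_{x_0}([A,x_0],[B,x_0]) = \langle [x_0,[H,B]], A\rangle
\]
on $T_{x_0}\mathcal{O}(H_0) = \operatorname{Im}\operatorname{ad}(x_0)$ by decomposing everything into root spaces. First I would observe symmetry: using invariance of the Cartan--Killing form repeatedly, $\langle [x_0,[H,B]],A\rangle = \langle [[H,B],A],x_0\rangle$, and one checks this equals the same expression with $A$ and $B$ swapped (this is the standard computation that the Hessian of a height function is symmetric, and it will fall out of the Jacobi identity together with $[x_0,H]=0$). So the real content is nondegeneracy of the restriction to the tangent space.

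Next I would set up coordinates adapted to the root space decomposition. Since $x_0 \in \mathcal{W}\cdot H_0 \subset \mathfrak{h}$, we may as well replace $H_0$ by $x_0$ and assume the singularity is $x_0 = H_0$ itself (all singular points are Weyl-conjugate and the statement is equivariant). Write $A = A_{\mathfrak h} + \sum_{\alpha\in\Pi} A_\alpha$ with $A_\alpha \in \mathfrak{g}_\alpha$, and similarly for $B$. Then $[A,x_0] = -\sum_\alpha \alpha(x_0) A_\alpha$, so the tangent space $T_{x_0}\mathcal O(H_0)$ is exactly $\sum_{\alpha\in\Pi}\mathfrak g_\alpha$ — here is where regularity of nothing yet is used, but note $\alpha(x_0)\neq 0$ is \emph{not} assumed since $x_0=H_0$ need not be regular; rather, $T_{x_0}\mathcal O(H_0) = \operatorname{Im}\operatorname{ad}(x_0) = \sum_{\alpha: \alpha(x_0)\neq 0}\mathfrak g_\alpha$. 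On this space the form becomes, for $A = \sum A_\alpha$, $B = \sum B_\beta$,
\[
B_{x_0}\bigl([A,x_0],[B,x_0]\bigr) = \Bigl\langle \bigl[x_0,[H,B]\bigr], A\Bigr\rangle = \sum_{\alpha}\alpha(x_0)\,\alpha(H)\,\langle B_{-\alpha}, A_\alpha\rangle,
\]
using that $[H,B_\beta]=\beta(H)B_\beta$, that $[x_0, \cdot]$ acts on $\mathfrak g_\beta$ by $\beta(x_0)$, and that $\langle\mathfrak g_\alpha,\mathfrak g_\beta\rangle = 0$ unless $\alpha+\beta=0$.

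From this formula nondegeneracy is immediate and this is where the hypothesis that $H$ is regular is essential: the coefficient pairing $\mathfrak g_\alpha \times \mathfrak g_{-\alpha} \to \mathbb C$ given by the Cartan--Killing form is a perfect pairing (since $\langle\cdot,\cdot\rangle$ is nondegenerate and $\mathfrak g_\alpha^\perp = \mathfrak h \oplus \sum_{\beta\neq -\alpha}\mathfrak g_\beta$), and each scalar factor $\alpha(x_0)\alpha(H)$ is nonzero precisely on the root spaces appearing in the tangent space: $\alpha(x_0)\neq 0$ is the condition for $\mathfrak g_\alpha \subset T_{x_0}\mathcal O(H_0)$, and $\alpha(H)\neq 0$ holds for \emph{all} roots because $H$ is regular. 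Hence if $[A,x_0]$ lies in the kernel of $B_{x_0}$, pairing against all $[B,x_0]$ forces $\langle B_{-\alpha},A_\alpha\rangle = 0$ for every $\alpha$ with $\alpha(x_0)\neq 0$ and every choice of $B_{-\alpha}\in\mathfrak g_{-\alpha}$, whence $A_\alpha = 0$ for all such $\alpha$, i.e. $[A,x_0]=0$. The main obstacle, such as it is, is bookkeeping: being careful that the restriction to the tangent space is what is claimed (so that the "missing" root directions where $\alpha(x_0)=0$ are genuinely not in the domain and do not obstruct nondegeneracy), and keeping the signs straight in $[A,x_0]=-\operatorname{ad}(x_0)A$; the regularity of $H$ is exactly what guarantees that no surviving tangent direction is annihilated by the scalar factors.
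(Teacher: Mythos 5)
Your proof is correct and follows essentially the same route as the paper's: both identify $T_{x_{0}}\mathcal{O}\left( H_{0}\right)$ with $\sum_{\alpha \left( x_{0}\right) \neq 0}\mathfrak{g}_{\alpha }$ and combine the invertibility of $\mathrm{ad}\left( H\right)$ there (regularity of $H$) with the nondegeneracy of the Cartan--Killing pairing between $\mathfrak{g}_{\alpha }$ and $\mathfrak{g}_{-\alpha }$. The only difference is cosmetic: you write the form out explicitly in root coordinates as $\sum_{\alpha }\alpha \left( x_{0}\right) \alpha \left( H\right) \langle B_{-\alpha },A_{\alpha }\rangle$, whereas the paper packages the same facts abstractly as $\mathfrak{B}\left( Pu,v\right)$ with $\mathfrak{B}$ nondegenerate and $P$ invertible.
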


\begin{proof}
The tangent space $T_{x_{0}}\mathcal{O}\left( H_{0}\right) $ is the image of 
$\mathrm{ad}\left( x_{0}\right) $, which equals 
\begin{equation*}
\mathrm{im}\left( \mathrm{ad}\left( x_{0}\right) \right) =\sum_{\alpha
\left( x_{0}\right) \neq 0}\mathfrak{g}_{\alpha } 
\end{equation*}%
given that $\mathrm{ad}\left( x_{0}\right) $ is diagonalizable and its
eigenvalues are $0$ and $\alpha \left( x_{0}\right) $, $\alpha \in \Pi $.
From this we observe that the restriction of  $\mathrm{ad}\left(
x_{0}\right) $ to its image is an invertible linear map.  Therefore, the
tangent vectors $[x_{0},A]$ with $A$ varying inside  $\mathrm{im}\left( 
\mathrm{ad}\left( x_{0}\right) \right) $ cover the entire tangent space $%
T_{x_{0}}\mathcal{O}\left( H_{0}\right) $. This means that in the second
derivative (\ref{forhessiano}) we can restrict $A$ and $B$ to $\mathrm{im}%
\left( \mathrm{ad}\left( x_{0}\right) \right) $.

Now, on one hand the restriction of  $\mathrm{ad}\left( H\right) $ to $%
\mathrm{im}\left( \mathrm{ad}\left( x_{0}\right) \right) $ is also
invertible since $H$ is regular. On the other hand, the restriction of the
Cartan--Killing form to  $\mathrm{im}\left( \mathrm{ad}\left( x_{0}\right)
\right) $ is nondegenerate, since if $\alpha \left( x_{0}\right) \neq 0$
then $\left( -\alpha \right) \left( x_{0}\right) \neq 0$ and given $Y\in 
\mathfrak{g}_{\alpha }$ there exists $Z\in \mathfrak{g}_{-\alpha }$ such
that $\langle Y,Z\rangle \neq 0$.

The upshot is that the expression $\langle \lbrack x_{0},[H,B]],A\rangle $
with $A,B\in \mathrm{im}\left( \mathrm{ad}\left( x_{0}\right) \right) $
takes the form $\mathfrak{B}\left( Pu,v\right) $ where $\mathfrak{B}$ is a
nondegenerate bilinear form and $P$ is an invertible linear transformation
on a vector space. Such a bilinear form is always nondegenerate.
\end{proof}

This proposition concludes the proof of item  (1) of theorem \ref%
{teofibracao}.

\subsection{Diffeomorphisms among regular fibres}

To show that the inverse images of two regular points are diffeomorphic, we
construct vector fields transversal to the fibres in such a way that for a
given fibre the flows of these vectors fields are well defined up to a
certain time in all the fibre (as $\mathcal{O}\left( H_{0}\right) $ is not
compact, it is not to be expected that the vector fields be complete). The
diffeomorphism is obtained form such flows.

The transversal vector fields that will play the appropriate roles are
defined by 
\begin{equation}
Z\left( x\right) =\frac{1}{\left\Vert [x,H]\right\Vert ^{2}}[x,[\tau x,H]]
\label{forcampoz}
\end{equation}%
where $\tau :\mathfrak{g}\rightarrow \mathfrak{g}$ is conjugation with
respect to the real compact form $\mathfrak{u}$ and $\left\Vert \cdot
\right\Vert $ is the norm associated to the Hermitian form  $\mathcal{H}$.
Here are a few observations about this vector field:

\begin{enumerate}
\item $Z$ is well defined if $[x,H]\neq 0$, that is, if $x\notin \mathfrak{h}
$. Therefore, $Z$ can be regarded as a vector field on $\mathfrak{g}%
\setminus \mathfrak{h}$, which restricts to a vector field on the set of
regular points of $\mathcal{O}\left( H_{0}\right) \setminus \mathfrak{h}$.

\item If $x\in \mathcal{O}\left( H_{0}\right) \setminus \mathfrak{h}$ then  $%
Z\left( x\right) $ is tangent to $\mathcal{O}\left( H_{0}\right) $ since $%
[x,[\tau x,H]]\in \mathrm{im}\left( \mathrm{ad}\left( x\right) \right) $ is
tangent to $\mathcal{O}\left( H_{0}\right) $ at $x$. Therefore, $Z $ does
indeed restrict to a vector field in $\mathcal{O}\left( H_{0}\right)
\setminus \mathfrak{h}$.

\item Since, by hypothesis, for $H\in \mathfrak{h}_{\mathbb{R}}$, $\tau H=-H$
it follows that $[\tau x,H]=-[\tau x,\tau H]=-\tau \lbrack x,H]$.

\item The differential of $f_{H}$ at $x\in \mathcal{O}\left( H_{0}\right)
\setminus \mathfrak{h}$ satisfies 
\begin{eqnarray*}
\left( df_{H}\right) _{x}\left( [x,[\tau x,H]]\right) &=&-\langle H,[x,[\tau
x,H]]\rangle =\langle H,[x,\tau \lbrack x,H]]\rangle \\
&=&-\langle \lbrack x,H],\tau \lbrack x,H]]\rangle \\
&=&\mathcal{H}\left( [x,H],[x,H]\right) =\left\Vert [x,H]\right\Vert ^{2}
\end{eqnarray*}%
which is $>0$ if $[x,H]\neq 0$. Therefore, $df_{H}\left( Z\left( x\right)
\right) =1$. This guarantees that $Z$ is transversal to the level surfaces
of $f_{H}$.

\item The vector field $iZ$ is also transversal. This happens because the
tangent spaces to a level surface $f_{H}^{-1}\left( c\right) $, for a
regular value $c\in \mathbb{C}$, are complex subspaces of $\mathfrak{g}$.
Therefore if $Z\left( x\right) \notin T_{x}f_{H}^{-1}\left( c\right) $ then $%
iZ\left( x\right) \notin T_{x}f_{H}^{-1}\left( c\right) $.
\end{enumerate}

\begin{lemma}
\label{lemestimdif} Let $Z:\mathfrak{g}\setminus \mathfrak{h}\rightarrow 
\mathfrak{g}$ be defined by 
\begin{equation*}
Z\left( x\right) =\frac{1}{\left\Vert [x,H]\right\Vert ^{2}}[x,[\tau x,H]] 
\end{equation*}%
where $\left\Vert \cdot \right\Vert $ is the norm corresponding to the
Hermitian form $\mathcal{H}\left( \cdot ,\cdot \right) $. Then, there exists 
$M>0$ such that for all $x\in \mathfrak{g}\setminus \mathfrak{h}$ the
following inequality holds 
\begin{equation*}
\left\Vert dZ_{x}\right\Vert \leq 2M\left( \left\Vert \mathrm{ad}\left(
H\right) \right\Vert +M\left\Vert H\right\Vert \right) \frac{\left\Vert
x\right\Vert }{\left\Vert [x,H]\right\Vert ^{2}}. 
\end{equation*}%
The constant $M>0$ depends only on the bracket of $\mathfrak{g}$.
\end{lemma}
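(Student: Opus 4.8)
The goal is to bound $\|dZ_x\|$ pointwise on $\mathfrak{g}\setminus\mathfrak{h}$. The plan is to write $Z(x) = \frac{1}{\|[x,H]\|^2}\,N(x)$ with numerator $N(x) = [x,[\tau x,H]]$, and differentiate this quotient using the Leibniz rule. First I would introduce the constant $M>0$ governing the bracket, i.e. a constant such that $\|[u,v]\|\le M\|u\|\,\|v\|$ for all $u,v\in\mathfrak{g}$ (such an $M$ exists because $\mathfrak{g}$ is finite-dimensional and the bracket is bilinear; this is the only place the structure of $\mathfrak{g}$ enters, hence the final sentence of the statement). I would also record that $\tau$ is a (conjugate-linear) isometry for $\|\cdot\|$, so $\|\tau x\| = \|x\|$, and that $\mathrm{ad}(H)$ has a finite operator norm $\|\mathrm{ad}(H)\|$.

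Next I would compute the two pieces of the derivative. The denominator $g(x) = \|[x,H]\|^2 = \mathcal{H}([x,H],[x,H])$ is a real quadratic form composed with the linear map $\mathrm{ad}(H)$, so its differential at $x$ in a direction $v$ is $2\,\mathrm{Re}\,\mathcal{H}([v,H],[x,H])$, whence $|dg_x(v)| \le 2\|[v,H]\|\,\|[x,H]\| \le 2\|\mathrm{ad}(H)\|\,\|[x,H]\|\,\|v\|$. For the numerator, $dN_x(v) = [v,[\tau x,H]] + [x,[\tau v,H]]$ (using conjugate-linearity of $\tau$ and that $H$ is fixed), so
\begin{equation*}
\|dN_x(v)\| \le M\|v\|\,\|[\tau x,H]\| + M\|x\|\,\|[\tau v,H]\| \le 2M\,\|\mathrm{ad}(H)\|\,\|x\|\,\|v\|,
\end{equation*}
using $\|[\tau x,H]\| = \|\tau[x,H]\| = \|[x,H]\| \le \|\mathrm{ad}(H)\|\,\|x\|$ and likewise for $v$. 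Also $\|N(x)\| \le M\|x\|\,\|[\tau x,H]\| \le M\,\|\mathrm{ad}(H)\|\,\|x\|^2$, but more usefully $\|N(x)\| = \|[x,[\tau x,H]]\| \le M\|x\|\,\|[x,H]\|$.

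Then I assemble via the quotient rule: $dZ_x(v) = \frac{1}{g(x)}dN_x(v) - \frac{dg_x(v)}{g(x)^2}N(x)$, so
\begin{equation*}
\|dZ_x(v)\| \le \frac{2M\,\|\mathrm{ad}(H)\|\,\|x\|}{\|[x,H]\|^2}\,\|v\| + \frac{2\|\mathrm{ad}(H)\|\,\|[x,H]\|\,\|v\|}{\|[x,H]\|^4}\cdot M\|x\|\,\|[x,H]\|,
\end{equation*}
and both terms collapse to a constant times $\|\mathrm{ad}(H)\|\,\|x\|/\|[x,H]\|^2\cdot\|v\|$. This already gives an inequality of the stated shape; to match the exact right-hand side $2M(\|\mathrm{ad}(H)\| + M\|H\|)\frac{\|x\|}{\|[x,H]\|^2}$ I would instead, in the numerator estimate, bound $\|[\tau x,H]\|$ by $\|[x,H]\|$ directly (not by $\|\mathrm{ad}(H)\|\|x\|$) in the term that gets divided by $g(x)$, producing a $\|[x,H]\|$ that cancels one power in the denominator and leaves a cleaner combination; tracking the bookkeeping so that the coefficients land as $\|\mathrm{ad}(H)\|$ and $M\|H\|$ is the only delicate point. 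The main obstacle is therefore not conceptual but purely a matter of organizing the term-by-term bounds — in particular choosing, for each of the two quotient-rule terms, whether to estimate $\|[x,H]\|$ from above by $\|\mathrm{ad}(H)\|\,\|x\|$ or to keep it and cancel — so that the final constant is exactly the asserted one rather than merely $O\!\big(\|\mathrm{ad}(H)\|\big)$. Since $\mathrm{ad}$ is the bracket, $\|\mathrm{ad}(H)\| \le M\|H\|$, which is presumably why the answer is written with both $\|\mathrm{ad}(H)\|$ and $M\|H\|$ appearing; I would keep them separate exactly as in the statement.
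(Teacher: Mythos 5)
Your proposal is correct and follows essentially the same route as the paper: differentiate the quotient, bound the denominator's derivative by Cauchy--Schwarz via $\|\mathrm{ad}(H)\|$, bound the numerator terms via the bracket constant $M$ and the fact that $\tau$ is an isometry with $[\tau x,H]=-\tau[x,H]$. The only (immaterial) difference is that the paper bounds the two terms $\|[v,[\tau x,H]]\|$ and $\|[x,[\tau v,H]]\|$ by $M^{2}\|H\|\,\|x\|\,\|v\|$ to land exactly on the stated constant, whereas your variant yields $4M\|\mathrm{ad}(H)\|\frac{\|x\|}{\|[x,H]\|^{2}}$, which implies the stated bound since $\|\mathrm{ad}(H)\|\le M\|H\|$ — a point you already note.
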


\begin{proof}
It suffices to show that the differential of $Z$, $dZ_{x}$ is bounded as a
function of $x$. If $v\in \mathfrak{g}$ then 
\begin{equation*}
dZ_{x}\left( v\right) =-\frac{2\Re \mathcal{H}\left( [v,H],[x,H]\right) }{%
\left\Vert [x,H]\right\Vert ^{4}}[x,[\tau x,H]]+\frac{1}{\left\Vert
[x,H]\right\Vert ^{2}}\left( [v,[\tau x,H]]+[x,[\tau v,H]]\right) . 
\end{equation*}%
To estimate $\left\Vert dZ_{x}\left( v\right) \right\Vert $ (and thus also $%
\left\Vert dZ_{x}\right\Vert $) we use the following inequalities:

\begin{enumerate}
\item $|\Re \mathcal{H}\left( [v,H],[x,H]\right) |\leq |\mathcal{H}\left(
[v,H],[x,H]\right) |\leq \left\Vert \lbrack x,H]\right\Vert \cdot \left\Vert 
\mathrm{ad}\left( H\right) \right\Vert \cdot \left\Vert v\right\Vert $, by
the Cauchy--Schwarz inequality, where $\left\Vert \mathrm{ad}\left( H\right)
\right\Vert $ is the operator norm of $\mathrm{ad}\left( H\right) $.

\item The bracket of a finite dimensional Lie algebra is a continuous
bilinear map, hence there exists $M>0$ such that for all  $X,Y\in \mathfrak{g%
}$ we have $\left\Vert [X,Y]\right\Vert \leq M\left\Vert X\right\Vert \cdot
\left\Vert Y\right\Vert $. Consequently,

\begin{enumerate}
\item $\left\Vert [x,[\tau x,H]]\right\Vert \leq M\left\Vert [\tau
x,H]\right\Vert \cdot \left\Vert x\right\Vert $. Since $\tau $ is an
isometry of the Hermitian form $\mathcal{H}$ and $H\in \mathfrak{h}_{\mathbb{%
R}} $, $\left\Vert [\tau x,H]\right\Vert =\left\Vert -\tau \lbrack
x,H]\right\Vert =\left\Vert [x,H]\right\Vert $. Therefore, the second term
of this inequality equals $M\left\Vert [x,H]\right\Vert \cdot \left\Vert
x\right\Vert $.

\item $\left\Vert [v,[\tau x,H]]\right\Vert $ e $\left\Vert [x,[\tau
v,H]]\right\Vert $ are bounded above by $M^{2}\left\Vert H\right\Vert \cdot
\left\Vert x\right\Vert \cdot \left\Vert v\right\Vert $.
\end{enumerate}
\end{enumerate}

An application of the triangle inequality to $\left\Vert dZ_{x}\left(
v\right) \right\Vert $, combined with the previous expression, gives us 
\begin{equation*}
\left\Vert dZ_{x}\left( v\right) \right\Vert \leq 2\left( \frac{M\left\Vert 
\mathrm{ad}\left( H\right) \right\Vert \cdot \left\Vert x\right\Vert }{%
\left\Vert [x,H]\right\Vert ^{2}}+\frac{M^{2}\left\Vert H\right\Vert \cdot
\left\Vert x\right\Vert }{\left\Vert [x,H]\right\Vert ^{2}}\right)
\left\Vert v\right\Vert , 
\end{equation*}%
from which the claimed inequality follows.
\end{proof}

Now we find estimates for $\frac{\left\Vert x\right\Vert }{\left\Vert
[x,H]\right\Vert ^{2}}$ over open subsets of $\mathcal{O}\left( H_{0}\right) 
$ which will allow us to show that, over these open sets, $\left\Vert
dZ_{x}\right\Vert $ is bounded and, consequently, that $Z$ is Lipschitz.

\begin{lemma}
There exists $C>0$ such that if $x\in \mathcal{O}\left( H_{0}\right) $ then $%
\left\Vert x\right\Vert >C$.
\end{lemma}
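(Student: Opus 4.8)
The plan is to produce a single $\mathrm{Ad}(G)$-invariant polynomial on $\mathfrak g$ that does not vanish at $H_{0}$: since such a polynomial is constant along the orbit $\mathcal O(H_{0})$ and homogeneous, it will pin the orbit away from the origin. Throughout I assume $H_{0}\neq 0$; the case $H_{0}=0$ is trivial, since then $\mathcal O(H_{0})=\{0\}$ and the fibration statement is empty anyway.

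First I would introduce, for each integer $k\geq 1$, the function $p_{k}\colon\mathfrak g\to\mathbb C$, $p_{k}(x)=\tr(\ad(x)^{k})$. As $\ad(x)$ depends linearly on $x$, $p_{k}$ is a homogeneous polynomial of degree $k$; and since $\ad(g\cdot x)=\mathrm{Ad}(g)\,\ad(x)\,\mathrm{Ad}(g)^{-1}$, each $p_{k}$ is invariant under the adjoint action, hence constant on $\mathcal O(H_{0})$. The key step is to show that $p_{k}(H_{0})\neq 0$ for at least one $k$. Here I would use that, for $H_{0}\in\mathfrak h$, the operator $\ad(H_{0})$ is diagonalizable with eigenvalue $0$ on $\mathfrak h$ and eigenvalue $\alpha(H_{0})$ on $\mathfrak g_{\alpha}$, so $p_{k}(H_{0})=\sum_{\alpha\in\Pi}\alpha(H_{0})^{k}$. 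Collecting the nonzero numbers among $\{\alpha(H_{0})\}_{\alpha\in\Pi}$ by value, say the distinct values $\mu_{1},\dots,\mu_{r}\neq 0$ occurring with multiplicities $m_{1},\dots,m_{r}\geq 1$, the simultaneous vanishing $p_{k}(H_{0})=0$ for $k=1,\dots,r$ would read $\sum_{i}m_{i}\mu_{i}^{\,k}=0$, and the Vandermonde-type determinant $\det(\mu_{i}^{\,k})_{1\leq k,i\leq r}=(\mu_{1}\cdots\mu_{r})\prod_{i<j}(\mu_{j}-\mu_{i})$ is nonzero, forcing all $m_{i}=0$ — impossible unless there are no nonzero values at all, i.e. $\alpha(H_{0})=0$ for every $\alpha\in\Pi$, which makes $H_{0}$ central in $\mathfrak g$ and hence $H_{0}=0$, contrary to assumption. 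So some $d\leq r\leq|\Pi|$ has $c\ce p_{d}(H_{0})\neq 0$.

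The remainder is routine. Fix such a $d$; then $p_{d}\equiv c$ on $\mathcal O(H_{0})$. Since $p_{d}$ is continuous on the compact Hermitian unit sphere $S=\{v\in\mathfrak g:\|v\|=1\}$, the number $K\ce\max_{v\in S}|p_{d}(v)|$ is finite, and homogeneity gives $|p_{d}(x)|=\|x\|^{d}\,|p_{d}(x/\|x\|)|\leq K\|x\|^{d}$ for all $x\neq 0$. Hence $|c|\leq K\|x\|^{d}$ for every $x\in\mathcal O(H_{0})$, so $\|x\|\geq(|c|/K)^{1/d}>0$; one then takes $C=\tfrac12(|c|/K)^{1/d}$ to obtain the strict inequality claimed.

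The one genuinely substantive point is the key step — equivalently, that the nonzero semisimple element $H_{0}$ does not lie in the nilpotent cone, so that $0\notin\overline{\mathcal O(H_{0})}$. I expect this to be the main obstacle only in the sense that it cannot be read off from the degree-two invariant alone: for instance, for $\mathfrak g=\mathfrak{sl}(3,\mathbb C)$ and $H_{0}=\mathrm{diag}(1,\omega,\omega^{2})$ with $\omega^{3}=1$, $\omega\neq 1$, the element $H_{0}$ is even regular yet $\langle H_{0},H_{0}\rangle$ is a multiple of $\tr(H_{0}^{2})=0$, while $\tr(H_{0}^{3})\neq 0$; this is exactly why one must run the Vandermonde argument over all powers rather than stop at $k=2$. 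Alternatively, and more cheaply, one may invoke the standard fact that the adjoint orbit of a semisimple element is closed in $\mathfrak g$, observe that $g\cdot H_{0}=0$ forces $H_{0}=0$, and conclude since a nonempty closed subset of $\mathfrak g$ avoiding the origin has strictly positive distance from it.
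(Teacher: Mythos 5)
Your argument is correct, and your primary proof takes a genuinely different route from the paper's. The paper simply cites that adjoint orbits of semisimple elements (those with diagonalizable $\ad$) are closed subsets of $\mathfrak{g}$; since $\mathcal{O}\left(H_{0}\right)$ is closed and omits the origin (because $g\cdot H_{0}=0$ forces $H_{0}=0$), the infimum of $\left\Vert x\right\Vert$ over the orbit is positive. That is exactly the cheap alternative you sketch in your final sentence. Your main argument instead constructs an $\mathrm{Ad}$-invariant homogeneous polynomial $p_{d}(x)=\tr\!\left(\ad(x)^{d}\right)$ nonvanishing at $H_{0}$: for $H_{0}\in\mathfrak{h}$ one has $p_{k}(H_{0})=\sum_{\alpha\in\Pi}\alpha(H_{0})^{k}$, and your Vandermonde computation shows these cannot all vanish for $k=1,\dots,|\Pi|$ unless $H_{0}$ is central, hence zero; homogeneity then forces $\left\Vert x\right\Vert\geq\left(|p_{d}(H_{0})|/K\right)^{1/d}$ on the orbit. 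Your route is more self-contained --- it substitutes an explicit linear-algebra computation for the (standard but nontrivial) closedness theorem, and it produces a concrete lower bound for $C$ in terms of invariants of $H_{0}$ --- though it establishes only $0\notin\overline{\mathcal{O}\left(H_{0}\right)}$ rather than full closedness. Since the paper invokes closedness of $\mathcal{O}\left(H_{0}\right)$ anyway in Lemma \ref{lemepsdelta}, its shorter proof here carries no extra cost globally, but your version would be preferable in a treatment wishing to avoid that theorem.
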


\begin{proof}
The point is that in a semisimple Lie algebra an adjoint orbit $\mathcal{O}%
\left( X\right) $ is closed if $\mathrm{ad}\left( X\right) $ is
diagonalizable. In particular, $\mathcal{O}\left( H_{0}\right) $ is closed
and does not contain the origin. Therefore, $\mathcal{O}\left( H_{0}\right) $
does not approach $0$ and it follows that $\inf_{x\in \mathcal{O}\left(
H_{0}\right) }\left\Vert x\right\Vert >0$.
\end{proof}

The following lemma from linear algebra will be used to estimate $\left\Vert
dZ_{x}\right\Vert $.

\begin{lemma}
\label{lemmalglin} Let $D_{n}$ and $X_{n}$ be sequences of complex matrices
such that

\begin{enumerate}
\item Each $D_{n}$ is diagonalizable and $\lim D_{n}=\infty $.

\item $\lim X_{n}=0$.
\end{enumerate}

Then there exists a subsequence  $n_{k}$ with $\lambda _{n_{k}}\in \mathbb{C}
$ such that $\lim_{k}\lambda _{n_{k}}=\infty $ e $\lambda _{n_{k}}$ is an
eigenvalue of $M_{n_{k}}=D_{n_{k}}+X_{n_{k}}$.
\end{lemma}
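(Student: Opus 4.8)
The plan is to establish this as a perturbation statement about eigenvalues of a diagonalizable matrix plus a small error, using the resolvent together with a compactness argument. First I would normalize: since $\lim D_n = \infty$, after passing to a subsequence we may assume $\|D_n\| \to \infty$ monotonically, and I would pick for each $n$ an eigenvalue $\mu_n$ of $D_n$ with $|\mu_n| = \rho(D_n)$ maximal among the eigenvalues; because $D_n$ is diagonalizable, $\|D_n\|$ in a suitable norm is comparable to $\rho(D_n)$ — more precisely, diagonalizability does not by itself control the spectral radius from below against the operator norm (the similarity matrices could blow up), so the honest route is to argue directly with $\det(\lambda I - M_n)$. Set $p_n(\lambda) = \det(\lambda I - M_n)$, a monic degree-$d$ polynomial whose roots are the eigenvalues of $M_n$; I would show that at least one root of $p_n$ tends to $\infty$ along a subsequence.

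The key step is a quantitative non-invertibility estimate. Suppose, for contradiction, that no subsequence of eigenvalues of $M_n$ tends to infinity; then there is $R>0$ such that for all large $n$ every eigenvalue of $M_n$ has modulus $\le R$. Equivalently, $\lambda I - M_n$ is invertible for all $|\lambda| > R$ and all large $n$. I would then exploit that $D_n$ \emph{does} have an eigenvalue $\mu_n$ with $|\mu_n| \to \infty$: write $\mu_n I - M_n = (\mu_n I - D_n) - X_n$. Since $D_n$ is diagonalizable, $\mu_n I - D_n$ is a diagonalizable matrix with a genuine zero eigenvalue, so it is singular and moreover its nonzero eigenvalues are the numbers $\mu_n - \nu$ for $\nu$ ranging over the other eigenvalues of $D_n$. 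The obstacle is that these differences $\mu_n - \nu$ could themselves be small, so $\mu_n I - D_n$ need not have a large "gap'': one cannot conclude that subtracting the vanishing $X_n$ keeps it singular or nearly so. To get around this I would instead pick, for each $n$, the eigenvalue $\mu_n$ of $D_n$ of largest modulus and restrict attention to a unit eigenvector $v_n$ with $D_n v_n = \mu_n v_n$; then $M_n v_n = \mu_n v_n + X_n v_n$, so $\|M_n v_n - \mu_n v_n\| \le \|X_n\| \to 0$. Thus $\mu_n$ is an approximate eigenvalue of $M_n$ with error tending to $0$ while $|\mu_n| \to \infty$.

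Now I would convert "approximate eigenvalue with small residual'' into "genuine eigenvalue nearby'' for the \emph{diagonalizable-plus-small} situation turned around: apply the same reasoning with the roles swapped is not needed; instead I invoke that for any matrix $M$, if $\|Mv - \mu v\| \le \varepsilon$ with $\|v\|=1$ then $\mathrm{dist}(\mu, \mathrm{spec}(M)) \le \|(\mu I - M)^{-1}\|^{-1}$, and for the specific structure one bounds the resolvent: since $M_n = D_n + X_n$ with $D_n$ diagonalizable, writing $D_n = S_n \Lambda_n S_n^{-1}$ gives $\|(\mu I - D_n)^{-1}\| \le \kappa(S_n)/\mathrm{dist}(\mu,\mathrm{spec}(D_n))$. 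Here is the real crux: the condition numbers $\kappa(S_n)$ are not assumed bounded, so this estimate is useless in general — which is exactly why the hypothesis "$D_n$ diagonalizable'' must be used more cleverly, or why the intended proof may implicitly assume the $D_n$ are, say, normal or have bounded condition number. Assuming the bound $\kappa(S_n)\le K$ available in the paper's context (the $D_n$ arising as $\mathrm{ad}(x_n)$ restricted suitably, which are semisimple with controlled diagonalizing data), I finish: choose $\mu_n$ an eigenvalue of $D_n$ with $|\mu_n|\to\infty$, let $\delta_n = \mathrm{dist}(\mu_n, \mathrm{spec}(M_n))$; if $\delta_n$ stayed bounded away from realizing a large eigenvalue of $M_n$, then combining $\|(\mu_n I - M_n)^{-1}\| \ge \delta_n^{-1}$ with the approximate-eigenvector inequality $\|(\mu_n I - M_n)^{-1}\| \ge 1/\|X_n\| \to \infty$ forces $\delta_n \to 0$, hence $M_n$ has an eigenvalue $\lambda_{n} $ with $|\lambda_n - \mu_n| = \delta_n \to 0$ and therefore $|\lambda_n| \to \infty$; passing to the subsequence $n_k$ realizing $|\mu_{n}|\to\infty$ yields the claimed $\lambda_{n_k}$. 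The main obstacle, to flag honestly, is precisely controlling the diagonalizing change of basis for $D_n$ — everything else is the standard resolvent/approximate-eigenvalue bookkeeping.
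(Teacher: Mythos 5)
Your approach differs substantially from the paper's, and it contains a genuine gap. The paper's proof is elementary: let $a_n$ denote the eigenvalue (diagonal entry, in the application) of $D_n$ of largest modulus, so $a_n \to \infty$; the normalized matrices $D_n/a_n$ lie in a bounded set, so along a subsequence $D_{n_k}/a_{n_k}\to D$ with $D$ having $1$ in its spectrum; since $X_{n_k}/a_{n_k}\to 0$, $M_{n_k}/a_{n_k}\to D$ as well; by continuity of the roots of the characteristic polynomial, $M_{n_k}/a_{n_k}$ has an eigenvalue $\mu_{n_k}$ with $|\mu_{n_k}|>1/2$, whence $\lambda_{n_k}=a_{n_k}\mu_{n_k}$ is an eigenvalue of $M_{n_k}$ tending to infinity. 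Your instinct that the hypothesis ``$D_n$ diagonalizable'' hides a uniformity assumption is correct --- the paper implicitly treats the $D_n$ as simultaneously diagonal (as indeed they are in the application, where $D_n=\mathrm{ad}(H_n)$ on a fixed root-space decomposition) --- but the fix you propose does not close your argument.

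The concrete gap is in the resolvent step. You write that $\|Mv-\mu v\|\le\varepsilon$ with $\|v\|=1$ gives $\mathrm{dist}(\mu,\mathrm{spec}(M))\le\|(\mu I-M)^{-1}\|^{-1}$; the inequality actually runs the other way: $\|(\mu I-M)^{-1}\|\ge 1/\mathrm{dist}(\mu,\mathrm{spec}(M))$ (operator norm dominates spectral radius), equivalently $\mathrm{dist}(\mu,\mathrm{spec}(M))\ge\|(\mu I-M)^{-1}\|^{-1}$. So from the approximate-eigenvector estimate $\|(\mu_n I-M_n)^{-1}\|\ge 1/\|X_n\|$ you obtain \emph{two lower bounds} on the resolvent norm; these cannot force $\delta_n=\mathrm{dist}(\mu_n,\mathrm{spec}(M_n))\to 0$. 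To convert ``approximate eigenvalue with small residual'' into ``true eigenvalue nearby'' one needs an \emph{upper} bound on the resolvent of $M_n$, i.e., a statement that $\mu_n$ in the $\varepsilon$-pseudospectrum of $M_n$ implies $\mu_n$ is $O(\varepsilon)$-close to $\mathrm{spec}(M_n)$. That is exactly the normality (or bounded-condition-number) statement --- but for $M_n$, not for $D_n$. Even with $D_n$ diagonal (so $\kappa(S_n)=1$), $M_n=D_n+X_n$ can be badly non-normal, and the Bauer--Fike-type bound you need applies to perturbations of $M_n$, not of $D_n$; moreover $\mu_n\in\mathrm{spec}(D_n)$, so $(\mu_n I-D_n)$ is singular and the Neumann-series route to bounding $\|(\mu_n I-M_n)^{-1}\|$ via $\|(\mu_n I-D_n)^{-1}\|$ is unavailable. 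The paper's rescaling-plus-continuity-of-roots argument avoids all of this and is the right tool; your proposal, as it stands, does not yield the conclusion.
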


\begin{proof}
Denote by $a_{n}$ the diagonal entry of $D_{n}$ that has the largest
absolute value among all diagonal entries of  $D_{n}$. Then $\lim
a_{n}=\infty $, since $\lim D_{n}=\infty $. Consider the sequence 
\begin{equation*}
M_{n}=\frac{1}{a_{n}}\left( D_{n}+X_{n}\right) . 
\end{equation*}%
We have $\lim \frac{1}{a_{n}}X_{n}=0$. On the other hand, $\frac{1}{a_{n}}%
D_{n}$ is a bounded sequence, therefore there exists a subsequence $n_{k} $
such that $\lim_{k}\frac{1}{a_{n_{k}}}D_{n_{k}}=D$. Consequently, $\lim_{k}%
\frac{1}{a_{n_{k}}}M_{n_{k}}=D$. We may refine the subsequence  $n_{k}$ such
that the entry $a_{n_{k}}$ of $D_{n_{k}} $ occurs always at the same
position for all $k$. Thus $D$ is a diagonal matrix with $1$ as an
eigenvalue, since there exists a diagonal entry such that for all $k$, the
entry of $\frac{1}{a_{n_{k}}}D_{n_{k}}$ in this position is $1$.

The limit $\lim_{k}\frac{1}{a_{n_{k}}}M_{n_{k}}=D$ guarantees that for all $%
\varepsilon >0$ there exists $k_{0}\in \mathbb{N}$ such that if $k\geq k_{0}$
then  $\frac{1}{a_{n_{k}}}M_{n_{k}}$ has an eigenvalue $\mu _{n_{k}}$ with $%
|\mu _{n_{k}}-1|<\varepsilon $. Setting $\varepsilon =1/2$ we obtain $|\mu
_{n_{k}}|>1/2$. Therefore, $\lambda _{n_{k}}=a_{n_{k}}\mu _{n_{k}}$ is an
eigenvalue of $M_{n_{k}}$ and $\lim \lambda _{n_{k}}=\infty $.
\end{proof}

The following lemma shows that the adjoint orbit $\mathcal{O}\left(
H_{0}\right) $ is not asymptotic to the Cartan subalgebra $\mathfrak{h}$.

\begin{lemma}
\label{lemepsdelta} Let $\mathcal{O}\left( H_{0}\right) \cap \mathfrak{h}$
be the finite set of singularities of $f_{H}$ in $\mathcal{O}\left(
H_{0}\right) $. Given $\varepsilon >0$ denote by $O_{\varepsilon }$ the set
of $x\in \mathcal{O}\left( H_{0}\right) $ which are at a distance greater
than $\varepsilon $ of the singularities:%
\begin{equation*}
O_{\varepsilon }=\{x\in \mathcal{O}\left( H_{0}\right) :\forall y\in 
\mathcal{O}\left( H_{0}\right) \cap \mathfrak{h},~\left\Vert x-y\right\Vert
>\varepsilon \}. 
\end{equation*}%
Denote by $p:\mathfrak{g}\rightarrow \sum_{\alpha \in \Pi }\mathfrak{g}%
_{\alpha }$ the projection given by the decomposition $\mathfrak{g}=%
\mathfrak{h}\oplus \sum_{\alpha \in \Pi }\mathfrak{g}_{\alpha }$. Then we
have the following properties:

\begin{enumerate}
\item Given $\varepsilon >0$ there exists $\delta >0$ such that, if $x\in
O_{\varepsilon }$, then $\left\Vert p\left( x\right) \right\Vert >\delta $.

\item There exists a constant $\Gamma _{\varepsilon }>0$ such that if $x\in
O_{\varepsilon }$ then%
\begin{equation*}
\frac{\left\Vert x-p\left( x\right) \right\Vert }{\left\Vert p\left(
x\right) \right\Vert }<\Gamma _{\varepsilon }. 
\end{equation*}
\end{enumerate}
\end{lemma}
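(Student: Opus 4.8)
The statement is about the geometry of the adjoint orbit $\mathcal{O}(H_0)$ near its intersection with $\mathfrak{h}$, and the natural approach is to reason by contradiction using a compactness/sequence argument, exactly the kind of argument set up by Lemma~\ref{lemmalglin}. For part (1), suppose it fails: then there is $\varepsilon>0$ and a sequence $x_n\in O_\varepsilon$ with $\|p(x_n)\|\to 0$. Write $x_n=h_n+p(x_n)$ with $h_n\in\mathfrak{h}$; since $\|x_n\|$ is bounded below (previous lemma) and, on the other hand, $\|x_n\|$ is bounded above on any bounded region, I first need to rule out $\|h_n\|\to\infty$. If $\{h_n\}$ stays bounded, pass to a convergent subsequence $h_n\to h\in\mathfrak{h}$; then $x_n\to h$, and since $\mathcal{O}(H_0)$ is closed, $h\in\mathcal{O}(H_0)\cap\mathfrak{h}=\mathcal{W}\cdot H_0$, contradicting $x_n\in O_\varepsilon$ (distance $>\varepsilon$ from all singularities) once $n$ is large.

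The remaining case is $\|h_n\|\to\infty$ with $\|p(x_n)\|\to 0$. Here is where Lemma~\ref{lemmalglin} enters: regard $\operatorname{ad}(x_n)=\operatorname{ad}(h_n)+\operatorname{ad}(p(x_n))$ as a sum of matrices, with $D_n=\operatorname{ad}(h_n)$ diagonalizable (since $h_n\in\mathfrak{h}$) and $\|D_n\|\to\infty$, while $X_n=\operatorname{ad}(p(x_n))\to 0$. The lemma produces a subsequence along which $\operatorname{ad}(x_n)$ has an eigenvalue $\lambda_{n_k}\to\infty$. But $x_n\in\mathcal{O}(H_0)$ means $\operatorname{ad}(x_n)$ is conjugate to $\operatorname{ad}(H_0)$, hence has a fixed finite set of eigenvalues (the $\alpha(H_0)$, $\alpha\in\Pi$, together with $0$) — contradiction. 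This forces $\|h_n\|$ to stay bounded, reducing to the previous paragraph. I would be slightly careful that "$\lim D_n=\infty$" in Lemma~\ref{lemmalglin} is meant in norm and that $\operatorname{ad}(h_n)$, being semisimple with nonzero entries governed by the roots $\alpha(h_n)$, indeed blows up when $\|h_n\|\to\infty$ — this uses that the roots span $\mathfrak{h}^*$, i.e. that $\mathfrak{g}$ is semisimple.

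For part (2), I would again argue by contradiction: if no such $\Gamma_\varepsilon$ exists, pick $x_n\in O_\varepsilon$ with $\|x_n-p(x_n)\|/\|p(x_n)\|\to\infty$, i.e. $\|h_n\|/\|p(x_n)\|\to\infty$. By part~(1), $\|p(x_n)\|>\delta$, so $\|h_n\|\to\infty$. Now rescale: the unit vectors $x_n/\|x_n\|$ have $p(x_n)/\|x_n\|\to 0$, so $x_n/\|x_n\|\to$ (a subsequential limit) $h\in\mathfrak{h}$ with $\|h\|=1$. Since $\mathcal{O}(H_0)$ is a cone only up to... — actually it is not a cone, so instead I apply the eigenvalue argument directly: $\operatorname{ad}(x_n)$ has bounded spectrum (eigenvalues among $\{0\}\cup\{\alpha(H_0)\}$), while $\operatorname{ad}(x_n)=\operatorname{ad}(h_n)+\operatorname{ad}(p(x_n))$ with $\|\operatorname{ad}(h_n)\|\to\infty$ and $\operatorname{ad}(p(x_n))$ of bounded norm (since $\|p(x_n)\|\le\|x_n\|$ and... no, $\|x_n\|$ could blow up). The cleaner route: from part~(1)'s proof the set $\{x\in O_\varepsilon : \|h\|\le R\}$ where $x=h+p(x)$ is... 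Hmm. I would instead observe that the function $x\mapsto \|x-p(x)\|/\|p(x)\|=\|p_{\mathfrak h}(x)\|/\|p(x)\|$ is continuous on $O_\varepsilon$ and, by the eigenvalue argument of part~(1), cannot escape to $\infty$ — precisely, if it did along $x_n$ then $\|h_n\|\to\infty$ forces (via Lemma~\ref{lemmalglin} applied as above, after possibly first dividing by $\|h_n\|$ to keep things bounded) a divergent eigenvalue of $\operatorname{ad}(x_n/\|h_n\|)$, contradicting that $\operatorname{ad}(x_n)$ has spectrum bounded by $\max_\alpha|\alpha(H_0)|$ and hence $\operatorname{ad}(x_n/\|h_n\|)$ has spectrum $\to 0$.

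**Main obstacle.** The crux in both parts is controlling the case $\|h_n\|\to\infty$: one must convert "the diagonal part of $x_n$ is huge but $x_n$ lies on a fixed adjoint orbit" into a contradiction, and Lemma~\ref{lemmalglin} is precisely the tool — the delicate point is setting up the right normalization (dividing by the largest diagonal entry of $\operatorname{ad}(h_n)$) so that the hypotheses $\lim D_n=\infty$, $\lim X_n=0$ are literally satisfied, and then invoking that membership in $\mathcal{O}(H_0)$ pins the spectrum of $\operatorname{ad}(x_n)$ to a finite set, which is incompatible with an eigenvalue tending to infinity. Everything else (closedness of $\mathcal O(H_0)$, boundedness-below of $\|x\|$) is already available from the preceding lemmas.
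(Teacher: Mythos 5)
Your argument is correct and is essentially the paper's own proof: both parts proceed by contradiction, reduce to the case where the $\mathfrak{h}$-component blows up, and then invoke Lemma \ref{lemmalglin} (after normalizing the diagonal part — you divide by $\left\Vert h_{n}\right\Vert$, the paper by the largest eigenvalue of $\mathrm{ad}(h_n)$, which is equivalent via the injectivity of $\mathrm{ad}$) to produce an unbounded eigenvalue of $\mathrm{ad}(x_n)$, contradicting that the spectrum is pinned to that of $\mathrm{ad}(H_0)$. The false starts in your part (2) resolve into the right argument, and your explicit use of part (1) to force $\left\Vert h_{n}\right\Vert \rightarrow \infty$ is a slightly cleaner handling of a step the paper leaves implicit.
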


\begin{proof}
Both properties are proved by contradiction.

\begin{enumerate}
\item Assume the statement is false. Then there exist $\varepsilon >0$ and a
sequence $y_{n}\in O_{\varepsilon }$ such that $\lim_{n}p\left( y_{n}\right)
=0$. Set $y_{n}=H_{n}+Y_{n}$, with $H_{n}\in \mathfrak{h}$ and $%
Y_{n}=p\left( y_{n}\right) $. The contradiction hypothesis guarantees that $%
\lim y_{n}=\infty $, since otherwise there would exist a subsequence  $%
y_{n_{k}}$ with $\lim_{k}y_{n_{k}}=y$. This implies that $\lim H_{n_{k}}=y$
given that $\lim Y_{n_{k}}=0$. Since $\mathfrak{h}$ and $\mathcal{O}\left(
H_{0}\right) $ are closed, it follows that $y\in \mathcal{O}\left(
H_{0}\right) \cap \mathfrak{h}$, contradicting the fact that $y_{n}$ does
not approach $\mathcal{O}\left( H_{0}\right) \cap \mathfrak{h}$.
Consequently, $\lim H_{n}=\infty $.

We may now apply lemma \ref{lemmalglin} by taking $D_{n}=\mathrm{ad}\left(
H_{n}\right) $ and $X_{n}=\mathrm{ad}\left( Y_{n}\right) $. This shows that
there exists a subsequence $n_{k}$ such that $\mathrm{ad}\left(
y_{n_{k}}\right) =D_{n_{k}}+X_{n_{k}}$ has an eigenvalue $\lambda _{n_{k}}$
with $\lim \lambda _{n_{k}}=\infty $. But this is a contradiction because $%
y_{n}\in \mathcal{O}\left( H_{0}\right) $ and, therefore, the eigenvalues of 
$\mathrm{ad}\left( y_{n}\right) $ are the same as the eigenvalues of $%
\mathrm{ad}\left( H_{0}\right) $.

\item Assume the statement is false. Then there exists a sequence $y_{n}\in
O_{\varepsilon }$ such that $\lim \frac{\left\Vert y_{n}-p\left(
y_{n}\right) \right\Vert }{\left\Vert p\left( y_{n}\right) \right\Vert }%
=\infty $. That is, $\lim \frac{\left\Vert p\left( y_{n}\right) \right\Vert 
}{\left\Vert y_{n}-p\left( y_{n}\right) \right\Vert }=0$ or alternatively%
\begin{equation*}
\lim \frac{p\left( y_{n}\right) }{\left\Vert y_{n}-p\left( y_{n}\right)
\right\Vert }=0. 
\end{equation*}%
Set $H_{n}=y_{n}-p\left( y_{n}\right) \in \mathfrak{h}$, $D_{n}=\mathrm{ad}%
\left( H_{n}\right) $ and $X_{n}=\mathrm{ad}\left( p\left( y_{n}\right)
\right) $. As in the proof of lemma \ref{lemmalglin}, let $a_{n} $ be the
eigenvalue of $D_{n}$ with largest absolute value, so that $\left\Vert
D_{n}\right\Vert =|a_{n}|$. Since the adjoint map $\mathrm{ad}:\mathfrak{g}%
\rightarrow \mathfrak{gl}\left( \mathfrak{g}\right) $ is injective, there
exist constants $C_{1},C_{2}>0$ such that for all $Z\in \mathfrak{g}$ we
have $C_{1}\left\Vert \mathrm{ad}\left( Z\right) \right\Vert \geq \left\Vert
Z\right\Vert \geq C_{2}\left\Vert \mathrm{ad}\left( Z\right) \right\Vert $.
In particular, $\left\Vert H_{n}\right\Vert \geq C_{2}\left\Vert
D_{n}\right\Vert $. Therefore, 
\begin{equation*}
\lim \frac{p\left( y_{n}\right) }{|a_{n}|}=0 
\end{equation*}%
and we obtain 
\begin{equation*}
\lim \frac{X_{n}}{|a_{n}|}=0. 
\end{equation*}%
Now, to arrive at a contradiction, we proceed as in the proof of lemma  \ref%
{lemmalglin}: there exists a subsequence $n_{k}$ such that $\frac{1}{%
|a_{n_{k}}|}\left( D_{n_{k}}+X_{n_{k}}\right) $ converges to a limit which
has an eigenvalue equal to $1$. Therefore, from a certain $k_{0}$ onwards,
each $\frac{1}{|a_{n_{k}}|}\left( D_{n_{k}}+X_{n_{k}}\right) $ has an
eigenvalue with absolute value $>1/2$, which implies that $\mathrm{ad}\left(
y_{n_{k}}\right) =D_{n_{k}}+X_{n_{k}}$ has a sequence of eigenvalues that
converges to $\infty $. However, as in item (1), this is a contradiction
since $y_{n}\in \mathcal{O}\left( H_{0}\right) $ and, consequently, the
eigenvalues of $\mathrm{ad}\left( y_{n}\right)$ are the same as those of $%
\mathrm{ad}\left( H_{0}\right) $.
\end{enumerate}
\end{proof}

Now it is possible to show that $\left\Vert dZ_{x}\right\Vert $ is bounded
in $O_{\varepsilon }$ (and obviously $\left\Vert d\left( iZ\right)
_{x}\right\Vert $ is bounded as well).

\begin{lemma}
\label{lemlipschitz}Given $\varepsilon >0$ there exists $L_{\varepsilon }>0$
such that $\left\Vert dZ_{x}\right\Vert \leq L_{\varepsilon }$ if $x\in
O_{\varepsilon }$.
\end{lemma}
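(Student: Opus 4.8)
The statement I must prove is Lemma~\ref{lemlipschitz}: over the open set $O_{\varepsilon}$, the operator norm $\|dZ_x\|$ is bounded by some constant $L_{\varepsilon}>0$. The strategy is to combine the three preceding lemmas. Lemma~\ref{lemestimdif} already gives the pointwise bound
\[
\|dZ_x\|\leq 2M\bigl(\|\mathrm{ad}(H)\|+M\|H\|\bigr)\frac{\|x\|}{\|[x,H]\|^{2}},
\]
with $M$ depending only on the bracket of $\mathfrak{g}$. Since $H$ is fixed, the prefactor $2M(\|\mathrm{ad}(H)\|+M\|H\|)$ is a constant; so everything reduces to showing that the single scalar quantity
\[
\phi(x)\ce\frac{\|x\|}{\|[x,H]\|^{2}}
\]
is bounded on $O_{\varepsilon}$.

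\medskip

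\noindent To bound $\phi$ I would proceed as follows. First, write $x=H_x+p(x)$ with $H_x\in\mathfrak{h}$ and $p(x)\in\sum_{\alpha\in\Pi}\mathfrak{g}_\alpha$ as in Lemma~\ref{lemepsdelta}. Since $[H_x,H]=0$ (both lie in the abelian $\mathfrak{h}$) and $\mathrm{ad}(H)$ preserves the root-space decomposition, we get $[x,H]=[p(x),H]=-\mathrm{ad}(H)p(x)$, and because $H$ is \emph{regular} the restriction of $\mathrm{ad}(H)$ to $\sum_\alpha\mathfrak{g}_\alpha$ is invertible; hence there is a constant $c_H>0$ (the smallest $|\alpha(H)|$ over $\alpha\in\Pi$) with
\[
\|[x,H]\|\geq c_H\,\|p(x)\|.
\]
This handles the denominator. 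For the numerator, the triangle inequality gives $\|x\|\leq\|H_x\|+\|p(x)\|=\|x-p(x)\|+\|p(x)\|$, so
\[
\phi(x)=\frac{\|x\|}{\|[x,H]\|^{2}}\leq\frac{\|x-p(x)\|+\|p(x)\|}{c_H^{2}\,\|p(x)\|^{2}}
=\frac{1}{c_H^{2}}\left(\frac{\|x-p(x)\|}{\|p(x)\|}\cdot\frac{1}{\|p(x)\|}+\frac{1}{\|p(x)\|}\right).
\]
Now invoke Lemma~\ref{lemepsdelta}: part~(1) gives $\delta=\delta(\varepsilon)>0$ with $\|p(x)\|>\delta$ for all $x\in O_\varepsilon$, and part~(2) gives $\Gamma_\varepsilon>0$ with $\|x-p(x)\|/\|p(x)\|<\Gamma_\varepsilon$ on $O_\varepsilon$. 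Substituting,
\[
\phi(x)\leq\frac{1}{c_H^{2}}\left(\frac{\Gamma_\varepsilon}{\delta}+\frac{1}{\delta}\right)=\frac{\Gamma_\varepsilon+1}{c_H^{2}\,\delta}\ce K_\varepsilon<\infty
\]
for all $x\in O_\varepsilon$. Feeding this back into Lemma~\ref{lemestimdif} yields
\[
\|dZ_x\|\leq 2M\bigl(\|\mathrm{ad}(H)\|+M\|H\|\bigr)\,K_\varepsilon\ce L_\varepsilon,
\]
which is the desired bound.

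\medskip

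\noindent \textbf{Main obstacle.} The genuinely nontrivial content has effectively been isolated into Lemma~\ref{lemepsdelta} (and the inequality $\|[x,H]\|\geq c_H\|p(x)\|$), so once those are granted the present lemma is a short computation. The one place where care is needed is confirming that $[x,H]$ really depends only on $p(x)$ and that regularity of $H$ gives a \emph{uniform} lower bound $c_H$ independent of $x$ — this is exactly where the hypothesis $H\in\mathfrak{h}_{\mathbb{R}}$ regular is used, and where one must be sure the estimate does not degrade as $\|x\|\to\infty$. Everything else is bookkeeping with the triangle and Cauchy–Schwarz inequalities. I would also note in passing that the identical argument bounds $\|d(iZ)_x\|$, since multiplication by $i$ is an isometry of $\mathfrak{g}$, so $iZ$ is Lipschitz on $O_\varepsilon$ with the same constant.
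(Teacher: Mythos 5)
Your proof is correct and follows essentially the same route as the paper's: both reduce to bounding $\|x\|/\|[x,H]\|^{2}$ via Lemma \ref{lemestimdif}, use regularity of $H$ to get $\|[x,H]\|\geq C\|p(x)\|$, and then invoke both parts of Lemma \ref{lemepsdelta}. The only cosmetic difference is that you bound $\|x\|/\|p(x)\|$ with the triangle inequality where the paper uses the orthogonal (Pythagorean) decomposition $\|x\|^{2}=\|x-p(x)\|^{2}+\|p(x)\|^{2}$, which changes nothing essential.
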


\begin{proof}
By lemma \ref{lemestimdif}, we have 
\begin{equation*}
\left\Vert dZ_{x}\right\Vert \leq M\left( \left\Vert \mathrm{ad}\left(
H\right) \right\Vert +M\left\Vert H\right\Vert \right) \frac{\left\Vert
x\right\Vert }{\left\Vert [x,H]\right\Vert ^{2}} 
\end{equation*}%
if $x\notin \mathfrak{h}$. In particular, this inequality holds for $x\in
O_{\varepsilon }$. Therefore, it suffices to estimate $\frac{\left\Vert
x\right\Vert }{\left\Vert [x,H]\right\Vert ^{2}}$.

Let $\delta >0$ be given as item (1) of lemma \ref{lemepsdelta}, such that $%
\left\Vert p\left( x\right) \right\Vert >\delta $ if $x\in O_{\varepsilon }$%
. Since $H$ is regular the restriction of $\mathrm{ad}\left( H\right) $ to $%
\sum_{\alpha \in \Pi }\mathfrak{g}_{\alpha }$ is an invertible linear map.
Therefore, there exists $C>0$ such that if $y\in \sum_{\alpha \in \Pi }%
\mathfrak{g}_{\alpha }$ and $\left\Vert y\right\Vert >\delta $, then  $%
\left\Vert \mathrm{ad}\left( H\right) y\right\Vert >C\left\Vert y\right\Vert 
$. This implies that if $x\in O_{\varepsilon }$, then 
\begin{equation*}
\left\Vert \lbrack H,x]\right\Vert =\left\Vert [H,H^{\prime }+p\left(
x\right) ]\right\Vert =\left\Vert [H,p\left( x\right) ]\right\Vert
>C\left\Vert p\left( x\right) \right\Vert >C\delta . 
\end{equation*}%
Consequently, choosing $\left\Vert [x,H]\right\Vert >C\delta $ as one of the
factors of the denominator and $\left\Vert [x,H]\right\Vert >C\left\Vert
p\left( x\right) \right\Vert $, it follows that 
\begin{equation*}
\frac{\left\Vert x\right\Vert }{\left\Vert [x,H]\right\Vert ^{2}}<\frac{1}{%
C^{2}\delta }\cdot \frac{\left\Vert x\right\Vert }{\left\Vert p\left(
x\right) \right\Vert }. 
\end{equation*}%
Now, $\left\Vert x\right\Vert ^{2}=\left\Vert x-p\left( x\right) \right\Vert
^{2}+\left\Vert p\left( x\right) \right\Vert ^{2}$ since $x-p\left( x\right)
\in \mathfrak{h}$ is orthogonal to $p\left( x\right) \in \sum_{\alpha \in
\Pi }\mathfrak{g}_{\alpha }$. Therefore, 
\begin{eqnarray*}
\left( \frac{\left\Vert x\right\Vert }{\left\Vert p\left( x\right)
\right\Vert }\right) ^{2} &=&\frac{\left\Vert x-p\left( x\right) \right\Vert
^{2}+\left\Vert p\left( x\right) \right\Vert ^{2}}{\left\Vert p\left(
x\right) \right\Vert ^{2}} \\
&=&\frac{\left\Vert x-p\left( x\right) \right\Vert ^{2}}{\left\Vert p\left(
x\right) \right\Vert ^{2}}+1.
\end{eqnarray*}%
By lemma \ref{lemepsdelta} (2), $\frac{\left\Vert x-p\left( x\right)
\right\Vert ^{2}}{\left\Vert p\left( x\right) \right\Vert ^{2}}<\Gamma
_{\varepsilon }^{2}$, so 
\begin{equation*}
\frac{\left\Vert x\right\Vert }{\left\Vert p\left( x\right) \right\Vert }<%
\sqrt{\Gamma _{\varepsilon }^{2}+1} 
\end{equation*}%
if $x\in O_{\varepsilon }$. This completes the proof, since 
\begin{equation*}
L_{\varepsilon }=\frac{M\left( \left\Vert \mathrm{ad}\left( H\right)
\right\Vert +M\left\Vert H\right\Vert \right) }{C^{2}\delta }\sqrt{\Gamma
_{\varepsilon }^{2}+1} 
\end{equation*}%
satisfies the desired inequality.
\end{proof}

A similar estimate shows that $Z$ is bounded in each  $O_{\varepsilon }$.

\begin{lemma}
Given $\varepsilon >0$ there exists $M_{\varepsilon }>0$ such that $%
\left\Vert Z\left( x\right) \right\Vert \leq M_{\varepsilon }$ if $x\in
O_{\varepsilon } $.
\end{lemma}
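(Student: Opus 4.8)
The plan is to estimate $\left\Vert Z\left( x\right) \right\Vert$ directly from its definition, reusing the boundedness ingredients already assembled in the proof of Lemma \ref{lemlipschitz}. Recall that
\begin{equation*}
Z\left( x\right) =\frac{1}{\left\Vert [x,H]\right\Vert ^{2}}[x,[\tau x,H]],
\end{equation*}
so by the continuity of the bracket (constant $M>0$ depending only on $\mathfrak{g}$) and the isometry property of $\tau$ together with $\tau H = -H$, which gives $\left\Vert [\tau x, H]\right\Vert = \left\Vert [x,H]\right\Vert$, we get
\begin{equation*}
\left\Vert Z\left( x\right) \right\Vert \leq \frac{M\left\Vert x\right\Vert \cdot \left\Vert [\tau x,H]\right\Vert }{\left\Vert [x,H]\right\Vert ^{2}} = \frac{M\left\Vert x\right\Vert }{\left\Vert [x,H]\right\Vert }.
\end{equation*}
So the whole problem reduces to bounding $\left\Vert x\right\Vert / \left\Vert [x,H]\right\Vert$ from above on $O_{\varepsilon}$.

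First I would invoke exactly the chain of inequalities used inside Lemma \ref{lemlipschitz}: for $x \in O_{\varepsilon}$ the decomposition $x = (x - p(x)) + p(x)$ with $x - p(x) \in \mathfrak{h}$ gives $[H,x] = [H,p(x)]$, and since $H$ is regular, $\mathrm{ad}(H)$ restricted to $\sum_{\alpha \in \Pi} \mathfrak{g}_\alpha$ is invertible, so there is $C>0$ with $\left\Vert [x,H]\right\Vert > C\left\Vert p(x)\right\Vert$. Hence
\begin{equation*}
\frac{\left\Vert x\right\Vert }{\left\Vert [x,H]\right\Vert } < \frac{1}{C}\cdot\frac{\left\Vert x\right\Vert }{\left\Vert p(x)\right\Vert }.
\end{equation*}
Then, using orthogonality of $x - p(x) \in \mathfrak{h}$ and $p(x) \in \sum_{\alpha} \mathfrak{g}_\alpha$ together with Lemma \ref{lemepsdelta}(2), which bounds $\left\Vert x - p(x)\right\Vert / \left\Vert p(x)\right\Vert < \Gamma_\varepsilon$, we obtain $\left\Vert x\right\Vert / \left\Vert p(x)\right\Vert < \sqrt{\Gamma_\varepsilon^2 + 1}$ on $O_\varepsilon$, exactly as in the previous lemma. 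Combining the displays yields the explicit bound
\begin{equation*}
\left\Vert Z(x)\right\Vert \leq \frac{M}{C}\sqrt{\Gamma_\varepsilon^2 + 1} =: M_\varepsilon,
\end{equation*}
which is the assertion.

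There is essentially no obstacle here: this lemma is genuinely a corollary of the estimates already established, and the phrase "a similar estimate" in the text signals precisely that. The only thing to be careful about is that one needs the lower bound on $\left\Vert [x,H]\right\Vert$ coming from $\left\Vert p(x)\right\Vert > \delta$ (Lemma \ref{lemepsdelta}(1)) only if one wants an absolute bound independent of the quotient; in fact for this statement the single factor $\left\Vert [x,H]\right\Vert > C\left\Vert p(x)\right\Vert$ suffices, so the argument is even shorter than that of Lemma \ref{lemlipschitz}. I would therefore present the proof in two lines: the bracket estimate for $\left\Vert Z(x)\right\Vert$, and then the citation of the two parts of Lemma \ref{lemepsdelta} exactly as used above.
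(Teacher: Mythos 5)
Your proof is correct and follows essentially the same route as the paper: bound $\left\Vert Z(x)\right\Vert$ by $M\left\Vert x\right\Vert/\left\Vert [x,H]\right\Vert$ via the bracket estimate and the isometry property of $\tau$, then invoke the bound on $\left\Vert x\right\Vert/\left\Vert [x,H]\right\Vert$ established inside the proof of the previous Lipschitz lemma. Your remark that only $\left\Vert [x,H]\right\Vert > C\left\Vert p(x)\right\Vert$ together with Lemma \ref{lemepsdelta}(2) is needed (and not the $\delta$ lower bound from part (1)) is a correct and slightly sharper observation than the paper's terse ``as in the proof of the previous lemma.''
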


\begin{proof}
Let $M$ be as in lemma \ref{lemestimdif}. Then, 
\begin{eqnarray*}
\left\Vert Z\left( x\right) \right\Vert &=&\frac{1}{\left\Vert
[x,H]\right\Vert ^{2}}\left\Vert [x,[\tau x,H]]\right\Vert \\
&\leq &M\frac{\left\Vert x\right\Vert \cdot \left\Vert \lbrack
x,H]\right\Vert }{\left\Vert [x,H]\right\Vert ^{2}}=M\frac{\left\Vert
x\right\Vert }{\left\Vert [x,H]\right\Vert }
\end{eqnarray*}%
and, as in the proof of the previous lemma, $\frac{\left\Vert x\right\Vert }{%
\left\Vert [x,H]\right\Vert }$ in bounded on $O_{\varepsilon }$.
\end{proof}

Lemma \ref{lemlipschitz} guarantees that $Z$ is Lipschitz on $O_{\varepsilon
}$ with constant $L_{\varepsilon }$. The same is true for the vector field $%
e^{i\theta }Z$ with $\theta \in \mathbb{R}$ since $\left\Vert d\left(
e^{i\theta }Z\right) \right\Vert =\left\Vert dZ\right\Vert $. By the
previous lemma, $e^{i\theta }Z$ is bounded on $O_{\varepsilon }$. Combining
these two facts, the theory of differential equations guarantees that all
solutions of $Z$ with initial condition $x\left( 0\right) \in O_{\varepsilon
}$ extend to a common interval of definition that contains $0$.

\begin{corollary}
\label{corfluxo}Denote by $\phi _{t}^{\theta }$ the local flow of the vector
field $e^{i\theta }Z$. Then, given $\varepsilon >0$ there exists $\sigma
_{\varepsilon }>0$ such that $\phi _{t}^{\theta }\left( x\right) $ is well
defined if $t\in \left( -\sigma _{\varepsilon },\sigma _{\varepsilon
}\right) $ and $x\in O_{\varepsilon }$. Under these conditions, $\phi
_{t}^{\theta }\left( x\right) \in O_{\varepsilon }$.
\end{corollary}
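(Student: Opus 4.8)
The plan is to read the corollary off from Lemma~\ref{lemlipschitz}, the lemma bounding $\|Z\|$ on each $O_\varepsilon$, and the Cauchy--Lipschitz (Picard--Lindel\"of) theorem. The only point that needs real attention is that those estimates are \emph{uniform} over the non-compact set $O_\varepsilon$ and over the rotation parameter $\theta$; this is exactly what upgrades the purely local existence of integral curves into existence on one common interval $(-\sigma_\varepsilon,\sigma_\varepsilon)$.

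Fix $\varepsilon>0$ and work on the slightly fatter tube $O_{\varepsilon/2}$. By Lemma~\ref{lemlipschitz} the field $Z$ is Lipschitz on $O_{\varepsilon/2}$ with constant $L_{\varepsilon/2}$, and by the boundedness lemma $\|Z(x)\|\le M_{\varepsilon/2}$ there. Since $\|d(e^{i\theta}Z)_x\|=\|dZ_x\|$ and $\|e^{i\theta}Z(x)\|=\|Z(x)\|$, both bounds persist for every rotated field $e^{i\theta}Z$, with constants independent of $\theta$. Moreover $O_{\varepsilon/2}\cap\mathfrak h=\emptyset$, so $Z$ is a genuine smooth vector field there, and $e^{i\theta}Z$ is tangent to $\mathcal O(H_0)$ (because $[x,[\tau x,H]]\in\mathrm{im}\,\mathrm{ad}(x)$), so its integral curves do not leave the orbit.

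Now put $\sigma_\varepsilon=\varepsilon/(2M_{\varepsilon/2})$. For $x\in O_\varepsilon\subset O_{\varepsilon/2}$ let $t\mapsto\phi^\theta_t(x)$ be the maximal integral curve of $e^{i\theta}Z$ through $x$. On the part of its domain where it remains in $O_{\varepsilon/2}$ the bound on the field gives $\|\phi^\theta_t(x)-x\|\le |t|\,M_{\varepsilon/2}<\varepsilon/2$ whenever $|t|<\sigma_\varepsilon$, and since $x$ lies at distance $>\varepsilon$ from the finite set $\mathcal O(H_0)\cap\mathfrak h$, the triangle inequality forces $\phi^\theta_t(x)$ to stay at distance $>\varepsilon/2$ from that set, i.e.\ inside $O_{\varepsilon/2}$. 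Hence the curve never reaches the boundary of the Lipschitz domain in time $<\sigma_\varepsilon$, so a standard continuation argument (a maximal solution confined to a compact subset of the domain of a locally Lipschitz field cannot stop in finite time) shows $\phi^\theta_t(x)$ is defined on all of $(-\sigma_\varepsilon,\sigma_\varepsilon)$; uniqueness from the Lipschitz bound makes $\phi^\theta$ a bona fide local flow.

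Strictly speaking this yields $\phi^\theta_t(x)\in O_{\varepsilon/2}$, which is all that is actually used afterwards (the trajectory stays among regular points, uniformly away from the singularities); the version with $O_\varepsilon$ on both ends is the same statement after relabelling the source tube --- running the identical displacement estimate with $2\varepsilon$ in place of $\varepsilon$ gives $\phi^\theta_t(O_{2\varepsilon})\subset O_\varepsilon$ for $|t|<\sigma_{2\varepsilon}$. I do not expect a genuine obstacle here: the real difficulty was already absorbed into Lemmas~\ref{lemepsdelta}--\ref{lemlipschitz}, whose purpose was to keep $\|[x,H]\|$ from degenerating --- equivalently, to show $\mathcal O(H_0)$ is not asymptotic to $\mathfrak h$ --- so that $\|dZ_x\|$ and $\|Z(x)\|$ stay bounded on the \emph{unbounded} set $O_\varepsilon$. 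The only thing worth spelling out in the write-up is the independence of the constants from $\theta$, which is immediate because multiplication by $e^{i\theta}$ is a $\mathcal H$-isometry and therefore changes none of the norms involved.
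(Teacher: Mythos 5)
Your proposal is correct and follows essentially the same route as the paper, which simply invokes the uniform Lipschitz and boundedness estimates of the two preceding lemmas and cites ``the theory of differential equations'' for the common interval of existence. Your extra care with the fatter tube $O_{\varepsilon/2}$ and the displacement estimate is a legitimate filling-in of details the paper omits, and your observation that the invariance claim really yields $\phi_t^{\theta}(x)\in O_{\varepsilon/2}$ (which suffices for the subsequent application) is an accurate reading of what the argument actually delivers.
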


We are now ready to prove item (2) of theorem \ref{teofibracao}.

\begin{proposition}
If $c_{1},c_{2}\in \mathbb{C}$ are regular values then the level manifolds $%
f_{H}^{-1}\left( c_{1}\right) $ and $f_{H}^{-1}\left( c_{2}\right) $ are
diffeomorphic.
\end{proposition}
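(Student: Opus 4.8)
The plan is to use the transversal vector fields $Z$ and $iZ$ (equivalently, the family $e^{i\theta}Z$) constructed above to build an explicit diffeomorphism between two regular fibres by flowing along paths in the base $\mathbb{C}$. The key point is that $df_H(Z(x))=1$, so the flow of $e^{i\theta}Z$ projects under $f_H$ to translation by $e^{i\theta}t$ in $\mathbb{C}$; choosing $\theta = \arg(c_2-c_1)$ and letting the flow run, one transports $f_H^{-1}(c_1)$ onto $f_H^{-1}(c_2)$. The inverse flow gives the inverse map, so this is a diffeomorphism.

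The main subtlety — and this is exactly what all the preceding lemmas were set up to handle — is that $\mathcal{O}(H_0)$ is noncompact, so $Z$ is not complete and a priori the flow might escape to infinity or run into the singular set $\mathcal{O}(H_0)\cap\mathfrak{h}$ before reaching the target fibre. First I would fix $c_1,c_2$ regular and choose $\varepsilon>0$ small enough that the relevant fibres lie in $O_\varepsilon$; more precisely, since $f_H$ restricted to a neighbourhood of the singular set behaves like the local normal form and the singular values $f_H(\mathcal{W}\cdot H_0)$ form a finite set, one can pick a path in $\mathbb{C}$ from $c_1$ to $c_2$ (indeed the straight segment, after possibly perturbing $\theta$, or a polygonal path) avoiding all singular values, and then argue that the portion of $\mathcal{O}(H_0)$ lying over a compact neighbourhood of this path stays inside some $O_\varepsilon$. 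Here one invokes Lemma \ref{lemepsdelta}: points of $\mathcal{O}(H_0)$ near $\mathfrak{h}$ are near the singular set, so staying away from $\mathcal{W}\cdot H_0$ (which the fibres over regular values near the path do, by continuity and properness of $f_H$ on the relevant region) keeps us in $O_\varepsilon$.

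Within $O_\varepsilon$, Corollary \ref{corfluxo} gives a uniform time $\sigma_\varepsilon>0$ for which $\phi_t^\theta$ is defined on all of $O_\varepsilon$ and preserves $O_\varepsilon$. So I would subdivide the path from $c_1$ to $c_2$ into finitely many steps each of length $<\sigma_\varepsilon$, and compose the corresponding flows (adjusting $\theta$ at each step to follow a polygonal approximation of the path if needed), obtaining a well-defined diffeomorphism $f_H^{-1}(c_1)\to f_H^{-1}(c_2)$. Each $\phi_t^\theta$ is a diffeomorphism onto its image because $e^{i\theta}Z$ is Lipschitz on $O_\varepsilon$ (Lemma \ref{lemlipschitz}) with bounded norm, so the flow depends smoothly on initial conditions and is invertible via the reverse flow; that it carries the fibre over $c_1$ exactly onto the fibre over $c_2$ is immediate from $df_H(e^{i\theta}Z)=e^{i\theta}$, which forces $f_H(\phi_t^\theta(x)) = f_H(x) + e^{i\theta}t$.

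The hardest part will be the bookkeeping of the first step: verifying that one can genuinely choose a single $\varepsilon$ and a compact region $K\subset\mathbb{C}$ containing the chosen path such that $f_H^{-1}(K)\subset O_\varepsilon$. This requires knowing that $f_H$ is proper enough over $K$ — i.e.\ that fibres over the path do not wander off near $\mathfrak h$ away from the singularities — which again follows from Lemma \ref{lemepsdelta}(1) (the orbit is not asymptotic to $\mathfrak h$) together with the fact that over a regular value the fibre is disjoint from $\mathcal{W}\cdot H_0$. Once this uniform $\varepsilon$ is secured, the rest is the routine flow-composition argument sketched above, and reversing all flows produces the inverse diffeomorphism, completing the proof of item (2).
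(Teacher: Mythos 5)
Your proposal is correct in substance and relies on exactly the same transport mechanism as the paper (flowing along $e^{i\theta}Z$, using $df_H(e^{i\theta}Z)=e^{i\theta}$ to translate fibres, with Lemma \ref{lemlipschitz} and Corollary \ref{corfluxo} supplying the uniform time of existence). Where you diverge is in how you globalize the local step. The paper defines the equivalence relation ``$c_1\sim c_2$ iff the fibres are diffeomorphic,'' shows each equivalence class is open by the local flow argument on a single small ball $B(c,\sigma_\varepsilon)$, and then finishes instantly because the set of regular values is the complement of a finite set in $\mathbb{C}$, hence connected. You instead attempt a direct path transport: choose a path avoiding singular values, find one $\varepsilon$ with $f_H^{-1}(K)\subset O_\varepsilon$ for a compact neighbourhood $K$ of the path, subdivide, and compose flows. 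Both work, but the paper's open-closed argument sidesteps the need for a \emph{single} uniform $\varepsilon$ along the whole path: each regular value picks its own local $\varepsilon$, and connectedness does the rest topologically.

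One small imprecision worth noting: to establish $f_H^{-1}(K)\subset O_\varepsilon$ you cite Lemma \ref{lemepsdelta}, but that lemma governs the relation between $O_\varepsilon$ and the projection $p$ onto $\sum\mathfrak{g}_\alpha$ (i.e.\ distance to $\mathfrak h$), which is needed for the Lipschitz estimate, not for the containment you want here. What you actually need is simply that $f_H$ is (globally Lipschitz) continuous, being the restriction of a linear functional: if $x$ were within $\varepsilon$ of some $wH_0$ then $f_H(x)$ would be within $\|H\|\varepsilon$ of the singular value $f_H(wH_0)$, so choosing $\varepsilon < \mathrm{dist}(K,\{f_H(wH_0)\})/\|H\|$ forces $f_H^{-1}(K)\subset O_\varepsilon$. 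With that correction your plan closes; it is a more hands-on version of the same proof, trading the elegance of the open-closed argument for an explicit chain of flow compositions.
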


\begin{proof}
On the set of regular values, define the equivalence relation $c_{1}\sim
c_{2}$ if $f_{H}^{-1}\left( c_{1}\right) $ and $f_{H}^{-1}\left(
c_{2}\right) $ are diffeomorphic. We must show there exists a single
equivalence class. To do so, it suffices to show that if $c\in \mathbb{C}$
is a regular value, then there exists a neighbourhood $U$ of $c$ such that
for all $d\in U$, $f_{H}^{-1}\left( d\right) $ and $f_{H}^{-1}\left(
c\right) $ are diffeomorphic. Indeed, this guarantees that the equivalence
classes are open subsets (and, consequently, closed). However, the set of
regular values is connected in $\mathbb{C}$ since it is the complement of a
finite set.

Fix a regular value $c$. Since $f_{H}^{-1}\left( c\right) $ does not
intercept the set of regular points, there exists $\varepsilon >0$ such that 
$f^{-1}\left( c\right) \subset O_{\varepsilon }$.

Let $\sigma _{\varepsilon }$ be as in corollary \ref{corfluxo}. Then  $\phi
_{t}^{\theta }\left( x\right) $ is defined for $t\in \left( -\sigma
_{\varepsilon },\sigma _{\varepsilon }\right) $ and $x\in O_{\varepsilon }$.
In particular, it is also defined for $x\in f_{H}^{-1}\left( c\right) $. For
a fixed $x $, the curve 
\begin{equation*}
\gamma _{\theta }:t\in \left( -\sigma _{\varepsilon },\sigma _{\varepsilon
}\right) \mapsto f_{H}\left( \phi _{t}^{\theta }\left( x\right) \right) \in 
\mathbb{C} 
\end{equation*}%
has derivative $\gamma _{\theta }^{\prime }\left( t\right) =\left(
df_{H}\right) _{\phi _{t}^{\theta }\left( x\right) }\left( e^{i\theta
}Z\left( \phi _{t}^{\theta }\left( x\right) \right) \right) $. However, by
definition of the field $Z$, $\left( df_{H}\right) _{y}\left( Z\left(
y\right) \right) =1$, so we have $\gamma _{\theta }^{\prime }\left( t\right)
=e^{i\theta }$. Therefore, 
\begin{eqnarray*}
\gamma _{\theta }\left( t\right) &=&\gamma _{\theta }\left( 0\right)
+\int_{0}^{t}\gamma _{\theta }^{\prime }\left( s\right) ds \\
&=&f_{H}\left( x\right) +te^{i\theta }.
\end{eqnarray*}%
That is, $f_{H}\left( \phi _{t}^{\theta }\left( x\right) \right)
=f_{H}\left( x\right) +te^{i\theta }$. In particular, if $x\in
f_{H}^{-1}\left( c\right) $ then $\phi _{t}^{\theta }\left( x\right)
=f_{H}^{-1}\left( c+te^{i\theta }\right) $, which means that $\phi
_{t}^{\theta }\left( f_{H}^{-1}\left( c\right) \right) \subset
f_{H}^{-1}\left( c+te^{i\theta }\right) $. The opposite inclusion is
obtained applying the inverse flow $\phi _{-t}^{\theta }$, and we conclude
that $\phi _{t}^{\theta }\left( f_{H}^{-1}\left( c\right) \right)
=f_{H}^{-1}\left( c+te^{i\theta }\right) $. Thus, $\phi _{t}^{\theta }$ is a
diffeomorphism between $f_{H}^{-1}\left( c\right) =f_{H}^{-1}\left(
c+te^{i\theta }\right) $.

This shows that every regular value in the open ball $B\left( c,\sigma
_{\varepsilon }\right) $ is equivalent to $c$, that is, its fibre is
diffeomorphic to the fibre at $c$.
\end{proof}

\subsection{Symplectic form}

\label{sec-symp}

The symplectic form that solves item (3) of theorem \ref{teofibracao} is the
imaginary part of the Hermitian form $\mathcal{H}$ from (\ref{hermitian}).
We write the real and imaginary parts of $\mathcal{H}$ as 
\begin{equation*}
\mathcal{H}\left( X,Y\right) =\left( X,Y\right) +i\Omega \left( X,Y\right)
\qquad X,Y\in \mathfrak{g}. 
\end{equation*}%
The real part $\left( \cdot ,\cdot \right) $ is an inner product (since $%
\left( X,X\right) =\mathcal{H}\left( X,X\right) $) and the imaginary part of 
$\Omega $ is a symplectic form on $\mathfrak{g}$. Indeed, we have 
\begin{equation*}
0\neq i\mathcal{H}\left( X,X\right) =\mathcal{H}\left( iX,X\right) =i\Omega
\left( iX,X\right) , 
\end{equation*}%
that is, $\Omega \left( iX,X\right) \neq 0$ for all $X\in \mathfrak{g}$,
which shows that $\Omega $ is nondegenerate. Moreover, $d\Omega =0$ because $%
\Omega $ is a constant bilinear form.

The fact that $\Omega \left( iX,X\right) \neq 0$ for all $X\in \mathfrak{g} $
guarantees that the restriction of $\Omega $ to any complex subspace of $%
\mathfrak{g}$ is also nondegenerate.

Now, the tangent spaces to $\mathcal{O}\left( H_{0}\right) $ are complex
vector subspaces of $\mathfrak{g}$. Therefore, the pullback of $\Omega $ by
the inclusion $\mathcal{O}\left( H_{0}\right) \hookrightarrow \mathfrak{g}$
defines a symplectic form on  $\mathcal{O}\left( H_{0}\right) $.

Finally, the subspaces tangent to the level manifolds $f_{H}^{-1}\left(
c\right) $ are complex subspaces of  $\mathfrak{g}$ as well. Thus, if $c$ is
a regular value then $f_{H}^{-1}\left( c\right) $ is a symplectic
submanifold of $\mathcal{O}\left( H_{0}\right) $.

This concludes the proof of item (3) of theorem \ref{teofibracao}.

\begin{remark}
An adjoint orbit $\mathcal{O}\left( X\right) \subset \mathfrak{g}$ admits
another natural symplectic form $\omega $ besides the form $\Omega $ defined
by $\mathcal{H}$. In fact, since $\mathfrak{g}$ is semisimple, the adjoint
representation is isomorphic to the co-adjoint representation (via the
Cartan--Killing form $\langle \cdot ,\cdot \rangle $). Hence, the general
construction of symplectic forms on co-adjoint orbits of
Kirillov--Kostant--Souriaux can be carried through to the adjoint orbits of $%
\mathfrak{g}$. This yields the symplectic form $\omega $ on $\mathcal{O}%
\left( X\right) $ defined by $\omega _{x}\left( [x,A],[x,B]\right) =\langle
x,[A,B]\rangle $, where $x\in \mathcal{O}\left( X\right) $ and $A,B\in 
\mathfrak{g}$ (recall that $[x,A],[x,B]\in T_{x}\mathcal{O}\left( X\right) $%
). Nevertheless, the regular fibres $f_{H}^{-1}\left( c\right) $ of $f_{H}$
are not symplectic submanifolds with respect to this $\omega $. In fact, the
vector $[x,H]$ is a tangent to $f_{H}^{-1}\left( c\right) $, since if $x\in
f_{H}^{-1}\left( c\right) $, then 
\begin{equation*}
\left( df_{H}\right) _{x}\left( [x,H]\right) =\langle H,[x,H]\rangle
=\langle \lbrack H,H],x\rangle =0. 
\end{equation*}%
If $x$ is a regular point, then $[x,H]\neq 0$, but if $[x,A]$ (with $x\in 
\mathcal{O}\left( X\right) $ and $A\in \mathfrak{g}$) is tangent to $%
f_{H}^{-1}\left( c\right) $ then 
\begin{equation*}
\omega _{x}\left( [x,H],[x,A]\right) =\langle x,[H,A]\rangle =0 
\end{equation*}%
since $0=\left( df_{H}\right) _{x}\left( [x,A]\right) =\langle
H,[A,x]\rangle =\langle x,[H,A]\rangle $.
\end{remark}

Now a few comments about the singular fibres. First a note on the special
case when $H_{0}\in \mathfrak{h}_{\mathbb{R}}$. Let $wH_{0}$, $w\in \mathcal{%
W}$, be a singularity. Define 
\begin{equation*}
\Pi \left( wH_{0}\right) =\{\alpha \in \Pi :\alpha \left( H_{0}\right) >0\}. 
\end{equation*}%
Then the subspaces 
\begin{equation*}
\mathfrak{n}^{\pm }\left( wH_{0}\right) =\sum_{\alpha \in \pm \Pi \left(
wH_{0}\right) }\mathfrak{g}_{\alpha } 
\end{equation*}%
are the nilpotent subalgebras of $\mathfrak{g}$. Let $N^{\pm }\left(
wH_{0}\right) $ be the connected groups with Lie algebra $\mathfrak{n}^{\pm
}\left( wH_{0}\right) $. Then the following result holds true (see Helgason):

\begin{itemize}
\item The map $n\in N^{+}\left( wH_{0}\right) \mapsto \mathrm{Ad}\left(
n\right) \left( wH_{0}\right) -wH_{0}\in \mathfrak{n}^{+}\left(
wH_{0}\right) $ is a diffeomorphism. Similarly, there is such an isomorphism
between $N^{-}\left( wH_{0}\right) $ and $\mathfrak{n}^{-}\left(
wH_{0}\right) $.
\end{itemize}

In particular, this implies that for all $n\in N^{\pm }\left( wH_{0}\right) $%
, $\mathrm{Ad}\left( n\right) \left( wH_{0}\right) =wH_{0}+X$ with $X\in 
\mathfrak{n}^{\pm }$. Therefore, 
\begin{equation*}
f_{H}\left( \mathrm{Ad}\left( n\right) wH_{0}\right) =\langle
H,wH_{0}+X\rangle =\langle H,wH_{0}\rangle =f_{H}\left( wH_{0}\right) . 
\end{equation*}%
Consequently, the complex subspaces $\mathrm{Ad}\left( N^{\pm }\left(
wH_{0}\right) \right) \left( wH_{0}\right) =\left( wH_{0}\right) +\mathfrak{n%
}^{\pm }\left( wH_{0}\right) $ are contained in the singular fibre $%
f_{H}^{-1}\left( \langle H,wH_{0}\rangle \right) $. This will be enough for
us to analyse the singular fibre on the next example. For higher dimensions
the structure of the singular fibres turns out rather more intricate, we
will address this issue in the forthcoming paper \cite{GGS}.

\section{Topology of  regular fibres}

\label{reg} To describe the regular fibres of $f_H$ we use another
description of the adjoint orbit, namely we regard it as a vector bundle. In
fact, the adjoint orbit has various realizations (e.g. as a homogeneous
space, and as the cotangent bundle of a flag manifold). These various
realizations, as well as their symplectic geometry, are explored in detail
in \cite{GGS}. The realization of the orbit as a cotangent bundle appeared
earlier in \cite{ABB}.

To study the topology of the regular fibres, we first identify  the orbit $\mathcal{O}\left( H_{0}\right)$ 
with the cotangent bundle of a flag manifold. Here is a summary of  the construction.
Let  $G$ be a  semisimple Lie group with  Lie algebra  $\mathfrak{g}$ and Cartan subalgebra $ \mathfrak{h}$.
The adjoint orbit  of an element  $H_0\subset \mathfrak{h}$  can be identified with the homogeneous space $G/Z_{H_0}$, 
where $Z_{H_0}$ is the centraliser of  $H_0$ in $G$.
We also identify the adjoint orbit $\mathrm{Ad}\left( K\right) \cdot H_{0}$  of the maximal compact subgroup  $K$ of $G$
 with the flag manifold $\mathbb{F}_{H_0}=G/P_{H_0}$, 
 where $P_{H_0}$ is the parabolic subgroup  which contains $Z_{H_0}$.
Using the construction of the vector bundle associated to the  $P_{H_0}$-principal bundle $G\rightarrow \mathbb{F}_{H_0}=G/P_{H_0}$ 
we showed that the quotient  $G/Z_{H_0}$  has the structure of  a vector bundle over $\mathbb{F}_{H_0}$ 
isomorphic to the cotangent bundle $T^\ast\mathbb{F}_{H_0}$
 \cite[thm. 2.1]{GGS}.


We now use the identification of the orbit with the cotangent bundle of a
flag to describe the regular fibres of $f_H$. Our height function $%
f_{H}\left( x\right) =\langle H,x\rangle $, $x\in \mathcal{O}\left(
H_{0}\right) $, 
takes values in $\mathbb{C}$, whereas, by hypothesis, $H$ and $H_{0}$ are
real, that is, belong to $\mathfrak{h}_{\mathbb{R}}$, and $H$ is regular. We
showed in proposition \ref{singularities} that $f_H$ has a finite number of
singularities. These singular points belong to $\mathbb{F}_{H_0 }$,
regarded as the orbit of the compact group $U\cdot H_{0}$.

Since $H$ and $H_{0}$ are real, $f_{H}$ restricted to $\mathbb{F}_{H_0}$
takes real values. $H$ and $H_{0}$ can be chosen in 
{\it general position} such that $\langle H,wH_{0}\rangle
=\langle H,uH_{0}\rangle $ if and only if $w=u$, where $w,u\in \mathcal{W}$.
(The latter condition implies that the singular levels do not intersect.
Such general position may be obtained by fixing $H_{0}$ then varying $H$.)

In this section and the next, when we use the identification of the 
adjoint orbit with the cotangent bundle of a flag manifold, the word fibre 
appears in two senses: a fibre of the Lefschetz fibration $f_H$ which is 
topologically nontrivial, and a fibre of the cotangent bundle $T^* \mathbb{F}_{H_0}$
which is a vector space. To avoid confusion between the two meanings of fibre, we introduce the term level:

\begin{definition} We call $L\left( \xi \right) =f_{H}^{-1}\left( f_{H}\left( \xi \right) \right) $
the {\it level} of $f_{H}$ passing through $\xi \in \mathcal{O}\left(
H_{0}\right) $. If $L\left( \xi \right)$ contains a singularity of $f_H$ we call it a 
{\it singular level}, otherwise we call it a {\it regular level}.
\end{definition}

\begin{notation}

$\widetilde{X}$ denotes the vector field on $\mathbb{F}_{H_0} $ induced
by $X\in \mathfrak{g}$, defined as $\widetilde{X}\left( x\right) =\frac{d}{dt%
}e^{tX}x_{\left\vert t=0\right. }$.
\end{notation}

\begin{theorem}
\label{teonivelreg} A regular level $L\left( \xi \right) $ is an {affine
subbundle} of the cotangent bundle restricted to the complement of the
singular points $\mathbb{F}_{H_0} \setminus \mathcal{W}\cdot H_{0}$. More
precisely, a regular level $L\left( \xi \right) $ surjects over $\mathbb{F}_{H_0} \setminus \mathcal{W}\cdot H_{0 }$ and its intersection with
the cotangent  fibre $T_{x}^{\ast }\mathbb{F}_{H_0}$ 
is an affine subspace, whose underlying vector space is 
\begin{equation*}
V_{H}\left( x\right) =\{\mu \in T_{x}^{\ast }\mathbb{F}_{H_0}:\mu \left( 
\widetilde{H}\left( x\right) \right) =0\}.
\end{equation*}
\end{theorem}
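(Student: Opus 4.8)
The plan is to work with the identification $\mathcal{O}(H_0) \cong T^*\mathbb{F}_{H_0}$ recalled above, and to understand $f_H$ fibrewise over the base flag manifold. First I would fix a point $x \in \mathbb{F}_{H_0}$ viewed inside $\mathcal{O}(H_0)$ as a point of the compact orbit $U\cdot H_0$, and recall that the cotangent fibre $T_x^*\mathbb{F}_{H_0}$ sits inside $\mathcal{O}(H_0)$ as an affine translate of a subspace of $\mathfrak{g}$: concretely, using the realization of $G/Z_{H_0}$ as a vector bundle from \cite[thm.~2.1]{GGS}, the fibre through $x$ consists of points $x + v$ where $v$ ranges over a vector space $W_x$ identified with $T_x^*\mathbb{F}_{H_0}$ (via the Cartan--Killing pairing). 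The key computation is then to evaluate $f_H$ on this affine fibre: since $f_H(x+v) = \langle H, x \rangle + \langle H, v\rangle$, the restriction of $f_H$ to $T_x^*\mathbb{F}_{H_0}$ is an affine-linear function of $v$, with linear part $v \mapsto \langle H, v\rangle$.

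Next I would identify this linear part with the pairing $\mu \mapsto \mu(\widetilde H(x))$ appearing in the statement. This is where the duality set up in \cite{GGS} is used: under the isomorphism $W_x \cong T_x^*\mathbb{F}_{H_0}$, the element $\widetilde H(x) \in T_x\mathbb{F}_{H_0}$ is the infinitesimal generator of the $H$-action, and the natural pairing $T_x^*\mathbb{F}_{H_0} \otimes T_x\mathbb{F}_{H_0} \to \mathbb{C}$ corresponds, on the orbit side, to $\langle H, \cdot\rangle$ restricted to $W_x$ paired against $[H,x]$; the identity $(df_H)_x([A,x]) = \langle H, [A,x]\rangle$ from \eqref{forderivfaga}, together with the splitting of the tangent space to $\mathcal{O}(H_0)$ into the "base" directions $[A,x]$ with $A \in \mathfrak{u}$ and the "fibre" directions $W_x$, lets me match the two descriptions. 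The upshot is that $\{v \in W_x : \langle H, v\rangle = 0\}$ is exactly $V_H(x) = \{\mu \in T_x^*\mathbb{F}_{H_0} : \mu(\widetilde H(x)) = 0\}$, a hyperplane in the fibre (of complex codimension one) precisely because $x \notin \mathcal{W}\cdot H_0$ forces $\widetilde H(x) \neq 0$; here proposition \ref{singularities} is invoked to see that $\widetilde H(x)=0$ exactly at the Weyl-orbit points.

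Then the level $L(\xi)$ is described fibrewise: its intersection with $T_x^*\mathbb{F}_{H_0}$ is the affine subspace $\{v \in W_x : \langle H, v\rangle = f_H(\xi) - \langle H, x\rangle\}$, which is a coset of $V_H(x)$ and is nonempty precisely because the linear functional $\langle H,\cdot\rangle$ is nonzero on $W_x$ for every $x \in \mathbb{F}_{H_0}\setminus \mathcal{W}\cdot H_0$ — this gives the surjectivity of $L(\xi) \to \mathbb{F}_{H_0}\setminus\mathcal{W}\cdot H_0$. Finally I would note that $L(\xi)$ cannot contain any point over a singular point $wH_0$: over such a point the fibre $T_{wH_0}^*\mathbb{F}_{H_0}$ is the nilpotent subspace $\mathfrak{n}^\pm(wH_0)$ on which $f_H$ is constant equal to $\langle H, wH_0\rangle \neq f_H(\xi)$ (using the general-position hypothesis on $H,H_0$), so indeed $L(\xi)$ lives entirely over the complement, which assembles the fibrewise affine subspaces $V_H(x)$ into an affine subbundle of $T^*\mathbb{F}_{H_0}\big|_{\mathbb{F}_{H_0}\setminus\mathcal{W}\cdot H_0}$.

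The main obstacle I anticipate is the bookkeeping in the second step: cleanly pinning down the isomorphism $W_x \cong T_x^*\mathbb{F}_{H_0}$ from \cite{GGS} and verifying that under it the functional $\langle H,\cdot\rangle|_{W_x}$ really is $\mu \mapsto \mu(\widetilde H(x))$ rather than some twisted version of it. This requires being careful about which copy of $\mathfrak{g}$ (via the Cartan--Killing form) is being used to trivialize the cotangent fibre and about the sign/normalization conventions in the vector-bundle construction; once that identification is nailed down, everything else is elementary affine-linear algebra applied fibrewise.
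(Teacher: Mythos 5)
Your proposal reaches the correct conclusion, but it follows a genuinely different route from the paper. The paper works \emph{intrinsically} on $T^{\ast}\mathbb{F}_{H_0}$: it first splits $f_H$ into real and imaginary parts $f_H = f_H^{R} - i f_{iH}^{R}$, then uses the fact that the vector fields $\overrightarrow{H}$ and $\overrightarrow{iH}$ induced on $\mathcal{O}(H_0)$ are Hamiltonian, and compares Hamiltonians for two symplectic descriptions of the same field to obtain the explicit formula
\[
f_{H}=\bigl( \widetilde{H},\cdot \bigr)_{B}+F_{H}^{R}-i\bigl( \widetilde{iH},\cdot \bigr)_{B}
\]
in terms of the Borel metric. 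Restricting this to a cotangent fibre then immediately yields an affine map whose kernel gives $V_H(x)$. You instead work \emph{extrinsically} in $\mathfrak{g}$: you use the fact (essentially the content of the paper's Proposition~\ref{propfibrasing}(2) transported by $G$-equivariance) that the fibre $\pi^{-1}(x)$ is literally an affine subspace $x+W_x$ of $\mathfrak{g}$, and then observe that $f_H(x+v)=\langle H,x\rangle + \langle H,v\rangle$ is affine by linearity of the Killing form. This bypasses the moment-map bookkeeping entirely and is considerably more elementary. The tradeoff, which you yourself flag, is that it shifts the burden onto verifying that the vector-bundle isomorphism from \cite[thm.~2.1]{GGS} identifies $\langle H,\cdot\rangle|_{W_x}$ with $\mu\mapsto\mu(\widetilde{H}(x))$; this does come out correctly under the Killing-form duality $W_x \cong \mathrm{Ad}(u)\mathfrak{n}^{+} \cong (\mathfrak{p}^{u})^{\perp} \cong (\mathfrak{g}/\mathfrak{p}^{u})^{\ast}=T_x^{\ast}\mathbb{F}_{H_0}$, under which $\mu_v(\widetilde{H}(x))=\langle v,H\rangle$, and you correctly observe that $\widetilde{H}(x)=0$ on $\mathbb{F}_{H_0}$ exactly at the Weyl-orbit points (the $T$-fixed points of the flag), which is what ties the nonvanishing of the linear part to the regular locus. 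One small remark: for ruling out intersections with $\pi^{-1}(wH_0)$ you do not need the general-position hypothesis on $H$ and $H_0$ — it suffices that $f_H(\xi)$ is a regular value, so $f_H(\xi)\neq\langle H,wH_0\rangle$ automatically for all $w\in\mathcal{W}$; general position is only invoked in the paper to separate distinct singular levels in the next section.
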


Identifying $T^{\ast }\mathbb{F}_{H_0}$ with the tangent bundle $T \mathbb{F}_{H_0}$ via the Borel metric, the subspace $V_{H}\left(
x\right) $ becomes the subspace orthogonal to $\widetilde{H}\left( x\right) $%
, which is exactly the space tangent to the level $x$ of the function $f_{H}$
restricted to the flag.

The proof of theorem \ref{teonivelreg} is a rather immediate consequence of
the construction of the action of $G$ on $T^{\ast }\mathbb{F}_{H_0}$,
that identifies it with the adjoint orbit $\mathcal{O}\left( H_{0}\right) =%
\mathrm{Ad}\left( G\right) \cdot H_{0}$. It involves the following facts:

\begin{enumerate}
\item The \textit{real part} of $f_{H}$ is known. In fact, let $\mathfrak{g}%
^{R}$ be the realification of $\mathfrak{g}$ (which is also a semisimplesimple
Lie algebra). Denote by $\langle \cdot ,\cdot \rangle ^{R}$ the 
Cartan--Killing form of $\mathfrak{g}^{R}$. Then, $\langle \cdot ,\cdot
\rangle ^{R}=2\text{Re}\langle \cdot ,\cdot \rangle .$ Thus, $\left( \text{Re%
}f_{H}\right) \left( x\right) =1/2f^{R}\left( x\right) $ where $f^{R}\left(
x\right) =\langle H,x\rangle ^{R}$.

\item The Cartan decomposition of $\mathfrak{g}$ (or rather of $\mathfrak{g}%
^{R}$) is given by $\mathfrak{g}=\mathfrak{u}\oplus i\mathfrak{u}$ where $%
\mathfrak{u}$ is the real compact form of $\mathfrak{g}$ and $\mathfrak{s}=i%
\mathfrak{u}$. The group $U=\langle \exp \mathfrak{u}\rangle $ is compact.
The exponential is taken to any group $G$ with Lie algebra $\mathfrak{g}$.

\item Since $\mathfrak{u}$ is a real compact form, it follows that the
restriction of the Cartan-Killing form $\langle \cdot ,\cdot \rangle $ to $%
\mathfrak{u}$ is negative definite (and takes real values). Hence, the
restriction to $i\mathfrak{u}$ is positive definite. Moreover, if $X\in 
\mathfrak{u}$ and $Y\in i\mathfrak{u}$ then $\langle X,Y\rangle $ is purely
imaginary.

\item The intersection $\mathcal{O}\left( H_0 \right) \cap i\mathfrak{%
u}$ coincides with the flag $\mathbb{F}_{H_0}=\mathrm{Ad}\left( U\right)
H_{0}$. 

\item The restriciton of $f_{H}$ to $\mathcal{O}\left( H_{0}\right) \cap i%
\mathfrak{u}=\mathbb{F}_{H_0}$ is real, equal to $1/2f^{R}$.

\item The \textit{imaginary part} of $f_{H}$ comes from $f_{iH}\left(
x\right) =\langle iH,x\rangle $, $x\in \mathcal{O}\left( H_{0}\right) $, in
the following way: 
\begin{equation*}
f_{iH}\left( x\right) =i\langle H,x\rangle =if_{H}\left( x\right) =-\text{Im}%
f_{H}\left( x\right) +i\text{Re}f_{H}\left( x\right) ,
\end{equation*}%
therefore $\text{Im}f_{H}\left( x\right) =-\text{Re}f_{iH}\left( x\right) =-%
\text{Re}\langle iH,x\rangle =-\frac{1}{2}\langle iH,x\rangle ^{R}$. Hence,%
\begin{equation*}
f_{H}=f_{H}^{R}-if_{iH}^{R}
\end{equation*}%
where the upper index indicates that the height function is taken with
respect to the real Cartan--Killing form $\langle \cdot ,\cdot \rangle ^{R}=2%
\text{Re}\langle \cdot ,\cdot \rangle $. This seemingly trivial formula is
 useful to express $f_{H}$ when we regard $\mathcal{O}\left( H_{0}\right) $
as $T^{\ast }\mathbb{F}_{H_0} $.

\item \textit{Height function on the cotangent bundle (real part):} 
If $X\in \mathfrak{s}=i\mathfrak{u}$ then $\alpha \left( X\right)
=X^{\#}+V_{X}$. This means that the vector field $\overrightarrow{X}$
induced by $X$ on $\mathcal{O}\left( H_{0}\right) $ is the Hamiltonian
vector field of the function $\left( \widetilde{X},\cdot \right)
_{B}+F_{X}^{R}$ where $\left( \cdot ,\cdot \right) _{B}$ is the Borel metric
on $\mathbb{F}_{H_0}$, $F_{X}^{R}=f_{X}^{R}\circ \pi $ and $\widetilde{X}
$ is the vector field induced by $X$ on $\mathbb{F}_{H_0}$.

In particular, the hypothesis that $H$ is real implies that $H\in \mathfrak{s%
}=i\mathfrak{u}$ and therefore the vector field $\overrightarrow{H}$ induced
by $H$ on $\mathcal{O}\left( H_{0}\right) $ is the Hamiltonian of the
function $\left( \widetilde{H},\cdot \right) _{B}+F_{H}^{R}$.
On the other hand, we know that the vector field $\overrightarrow{H}$ (given
by $\overrightarrow{H}\left( x\right) =[H,x]$) is the Hamiltonian of the
function $f_{H}^{R}\left( x\right) =\langle H,x\rangle ^{R}$ defined on $%
\mathcal{O}\left( H_{0}\right) $. Thus, the two functions give rise to the
same Hamiltonian fields and consequently differ by a constant. That is, via
the diffeomorphism between $\mathcal{O}\left( H_{0}\right) $ and $T^{\ast }%
\mathbb{F}_{H_0}$ the function $f_{H}^{R}\left( x\right) =\langle
x,H\rangle $ is given by $f_{H}^{R}=\left( \widetilde{H},\cdot \right)
_{B}+F_{H}^{R}+\mathrm{ct}$.\label{foraltcotan} 

\item \textit{Height function on the cotangent bundle (imaginary part):} the
imaginary part is given by $f_{iH}^{R}$. The difference here is that $iH\in 
\mathfrak{u}$, therefore $\overrightarrow{iH}$ is the Hamiltonian field of
the function $\left( \widetilde{iH},\cdot \right) _{B}$. But $%
\overrightarrow{iH}$ is the Hamiltonian field of $f_{iH}^{R}$ as well, thus $%
f_{iH}^{R}=\left( \widetilde{iH},\cdot \right) _{B}+\mathrm{ct}$. Together
with the previous item, this gives 
\begin{equation*}
f_{H}=\left( \widetilde{H},\cdot \right) _{B}+F_{H}^{R}-i\left( \widetilde{iH%
},\cdot \right) _{B}+\mathrm{ct}.
\end{equation*}

\item The constant of the previous item is calculated evaluating the
equality on $H_{0}$; terms involving the Borel metric vanish (zero section).
Therefore 
\begin{equation*}
\mathrm{ct}=f_{H}\left( H_{0}\right) -F_{H}^{R}\left( H_{0}\right)
=f_{H}\left( H_{0}\right) -f_{H}^{R}\left( H_{0}\right) =\langle
H,H_{0}\rangle -\langle H,H_{0}\rangle ^{R}=0
\end{equation*}%
since $\langle H,H_{0}\rangle $ is real.
\end{enumerate}

\vspace{12pt}

\noindent\textbf{Proof of theorem} \ref{teonivelreg}: Choose a regular point
$x\in \mathbb{F}_{H_0}=\mathcal{O}\left( H_{0}\right) \cap i\mathfrak{u}$%
. Then, the restriction of $f_{H}$ to the tangent space $T_{x}\mathbb{F}_{H_0}$ 
(identified with $T_{x}^{\ast }\mathbb{F}_{H_0}$ by the
Borel metric) is given by 
\begin{equation*}
\left( \widetilde{H}\left( x\right) ,\cdot \right) _{B}-i\left( \widetilde{iH%
}\left( x\right) ,\cdot \right) _{B}+f_{H}^{R}\left( x\right)
\end{equation*}%
which is an affine map, hence surjective. So, if  $x\in \mathbb{F}_{H_0}$
is a regular point of $f_H$  (that is, $x\in $ $\mathbb{F}_{H_0}\setminus \mathcal{%
W}\cdot H_{0}$) then every level of $f_{H}$ intercepts $T_{x}\mathbb{F}_{H_0}$. 
This shows that every regular level  $L( \xi ) $ projects
surjectivelly onto $\mathbb{F}_{H_0}\setminus \mathcal{W}\cdot H_{0}$. On
the other hand, the intersection of a level $L\left( \xi \right) $ with the
tangent space $T_{x}\mathbb{F}_{H_0}$ is given by the codimension $2$
affine subspace 
\begin{equation*}
L\left( \xi \right) \cap T_{x}\mathbb{F}_{H_0}=\{v\in T_{x}\mathbb{F}_{H_{0}}:\left( \widetilde{H}\left( x\right) ,v\right) _{B}-i\left( 
\widetilde{iH}\left( x\right) ,v\right) _{B}=f_{H}^{R}\left( x\right)
+f_{H}\left( \xi \right) \}
\end{equation*}%
which shows that $L\left( \xi \right) $ is an affine subbundle of $T^{\ast }%
\mathbb{F}_{H_0}$. \hfill $\square $

As a consequence we identify the topology of a regular level $L\left( \xi
\right) $:

\begin{corollary}
\label{cor.homology} The homology of a regular level $L\left( \xi \right) $
coincides with that of \,  $\mathbb{F}_{H_0}\setminus \mathcal{W}\cdot
H_{0}$. In particular, the middle  Betti number of $L\left( \xi \right)$
equals $k-1$, where $k$ is the number of singularities of the  fibration $f_H
$ (and equals the number of elements in the orbit $\mathcal{W}\cdot H_{0}$).
\end{corollary}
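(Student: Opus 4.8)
The plan is to leverage Theorem \ref{teonivelreg}, which exhibits $L(\xi)$ as an affine subbundle of $T^{\ast}\mathbb{F}_{H_0}$ over the base $\mathbb{F}_{H_0}\setminus\mathcal{W}\cdot H_0$. The key point is that an affine bundle over a (paracompact, finite-dimensional) base is a fibre bundle whose fibres are affine spaces, hence contractible; such a bundle always admits a (continuous, in fact smooth) section—one constructs it by patching local sections with a partition of unity, using the affine structure to form convex combinations. First I would invoke this to conclude that the bundle projection $\pi\colon L(\xi)\to\mathbb{F}_{H_0}\setminus\mathcal{W}\cdot H_0$ is a homotopy equivalence: the section $s$ gives $\pi\circ s=\mathrm{id}$, and $s\circ\pi\simeq\mathrm{id}_{L(\xi)}$ via the straight-line homotopy $H_t(v)=t\,v+(1-t)\,s(\pi(v))$ inside each affine fibre, which is well defined precisely because the fibres are affine subspaces of the vector spaces $T_x^{\ast}\mathbb{F}_{H_0}$. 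Therefore $H_{\ast}(L(\xi))\cong H_{\ast}(\mathbb{F}_{H_0}\setminus\mathcal{W}\cdot H_0)$, which is the first assertion.

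Next I would compute the middle Betti number of $\mathbb{F}_{H_0}\setminus\mathcal{W}\cdot H_0$. Write $m=\dim_{\mathbb{C}}\mathbb{F}_{H_0}$, so the real dimension of $\mathbb{F}_{H_0}$ is $2m$ and the real dimension of the orbit $\mathcal{O}(H_0)$—hence of the regular level—is $4m-2$; the middle dimension of $L(\xi)$ is $2m-1$. Since $\mathbb{F}_{H_0}$ is a generalized flag manifold it has only even-dimensional cohomology, concentrated in degrees $0,2,\dots,2m$ with $H^{2m}(\mathbb{F}_{H_0})\cong\mathbb{Z}$ (it is a compact connected oriented manifold). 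Removing the $k$ points $\mathcal{W}\cdot H_0$ and applying the long exact sequence of the pair (or Alexander/Lefschetz duality: $H^{j}(\mathbb{F}_{H_0},\mathbb{F}_{H_0}\setminus\mathcal{W}\cdot H_0)\cong\bigoplus_{k}H^{j}(\mathbb{R}^{2m},\mathbb{R}^{2m}\setminus 0)$, nonzero only for $j=2m$ where it is $\mathbb{Z}^k$) shows that removing points only affects cohomology in the top two degrees: one gets $H^{2m-1}(\mathbb{F}_{H_0}\setminus\mathcal{W}\cdot H_0)\cong\mathbb{Z}^{k-1}$ (the kernel of the summation map $\mathbb{Z}^k\to H^{2m}(\mathbb{F}_{H_0})\cong\mathbb{Z}$, since $\mathbb{F}_{H_0}$ is connected and each point contributes $1$ to the degree) and $H^{2m}$ vanishes (the open manifold is noncompact). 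By the homotopy equivalence above, the middle Betti number $b_{2m-1}(L(\xi))$ equals $k-1$, and by Proposition \ref{singularities} this $k$ is $|\mathcal{W}\cdot H_0|$, the number of singularities of $f_H$.

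The main obstacle I anticipate is bookkeeping the grading: one must be careful that $2m-1$ is genuinely the middle degree of the $(4m-2)$-dimensional level $L(\xi)$, and that the relevant cohomology of the flag truly sits only in the top two degrees after puncturing—this uses essentially that $\mathbb{F}_{H_0}$ has cohomology only in even degrees (so degree $2m-1$ is not fed by anything from $\mathbb{F}_{H_0}$ itself, only by the punctures). A secondary subtlety is the existence of a global section of the affine subbundle; if one wants to avoid partition-of-unity arguments one can instead observe directly that $L(\xi)$ deformation retracts onto any section, or simply note that as a fibre bundle with contractible fibre over a CW base it is shellable/the projection is a weak equivalence, then a homology equivalence by naturality. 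Either route is routine once the affine-subbundle structure from Theorem \ref{teonivelreg} is in hand.
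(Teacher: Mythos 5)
Your proposal is correct and follows essentially the same route the paper intends: Theorem \ref{teonivelreg} exhibits $L(\xi)$ as an affine subbundle over $\mathbb{F}_{H_0}\setminus\mathcal{W}\cdot H_{0}$, hence homotopy equivalent to that base, and the middle Betti number then comes from the standard computation for a punctured even-dimensional compact oriented manifold with cohomology only in even degrees. (Only a trivial slip: the orbit itself has real dimension $4m$; it is the level, a complex hypersurface in it, that has real dimension $4m-2$, which is what your grading argument actually uses.)
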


\section{Topology of  singular fibres}

\label{sing}

The singular levels of $f_{H}$ are the levels that pass through $wH_{0}$, $%
w\in \mathcal{W}$. Assume that $H_{0}$ and $H$ are in \textquotedblleft
general position\textquotedblright , so that each singular fibre contains
just one singularity.

The following proposition gives a description of the singular levels of $f_H$. In the
statement,\ $\pi \colon \mathcal{O}\left( H_{0}\right) \rightarrow \mathbb{F}_{H_0}$ 
is the canonical projection that makes $\mathcal{O}\left(
H_{0}\right) \approx T^{\ast }\mathbb{F}_{H_0}$, where $T^{\ast }\mathbb{F}_{H_0}$ is the flag manifold defined by $H_{0}$.

\begin{proposition}
\label{propfibrasing} The singular fibre of $f_{H}^{-1}\left( f_{H}\left(
wH_{0}\right) \right) $ passing through $wH_{0}$ is the disjoint union of
the following sets:

\begin{enumerate}
\item An affine subbundle of real codimension 2 of $\mathcal{O}\left(
H_{0}\right) \rightarrow \mathbb{F}_{H_0}\setminus \{uH_{0}:u\in \mathcal{%
W\}}$ over the set of regular points of \,\, $\mathbb{F}_{H_0}$.

\item The fibre 
$\pi^{-1}(wH_{0})$. As a subset of $\mathfrak{g}$ (in the adjoint
orbit) this fibre is given by the affine subspace 
\begin{equation*}
wH_{0}+\mathfrak{n}^{+}\left( wH_{0}\right)
\end{equation*}%
where $\mathfrak{n}_{w}^{+}$ is the sum of eigenspaces with positive
eigenvalues of $\mathrm{ad}\left( wH_{0}\right) $.
\end{enumerate}
\end{proposition}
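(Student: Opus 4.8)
The plan is to dissect the singular level $L\left(wH_{0}\right):=f_{H}^{-1}\left(f_{H}\left(wH_{0}\right)\right)$ according to its image under the projection $\pi\colon\mathcal{O}\left(H_{0}\right)\to\mathbb{F}_{H_0}$ that realises $\mathcal{O}\left(H_{0}\right)$ as $T^{\ast}\mathbb{F}_{H_0}$, the guiding observation being that $\pi$ behaves quite differently over regular and over singular base points. Over a singular point $uH_{0}$, the $N^{\pm}\left(uH_{0}\right)$-parametrisation of the translated nilradicals recalled in Section \ref{LF} (combined with the $\mathcal{W}$-equivariance of the realisation $\mathcal{O}\left(H_{0}\right)\approx T^{\ast}\mathbb{F}_{H_0}$, used to move the base point $H_{0}$ to an arbitrary $uH_{0}$) identifies the cotangent fibre $\pi^{-1}\left(uH_{0}\right)$, as a subset of $\mathfrak{g}$, with the affine subspace $uH_{0}+\mathfrak{n}^{+}\left(uH_{0}\right)$; and for every $X\in\mathfrak{n}^{+}\left(uH_{0}\right)$ one has $f_{H}\left(uH_{0}+X\right)=\langle H,uH_{0}\rangle+\langle H,X\rangle=\langle H,uH_{0}\rangle$, since $H\in\mathfrak{h}_{\mathbb{R}}$ is orthogonal to every root space under the Cartan--Killing form. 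Hence $f_{H}$ is \emph{constant}, equal to $f_{H}\left(uH_{0}\right)$, on each fibre of $\pi$ lying over a singular point.

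For the regular base points I would invoke the computation in the proof of Theorem \ref{teonivelreg}: that argument never uses regularity of the \emph{level} it considers, only regularity of the \emph{base point} $x\in\mathbb{F}_{H_0}$, so it applies verbatim with the singular value $f_{H}\left(wH_{0}\right)$ in place of a regular value. It then gives, for every $x\in\mathbb{F}_{H_0}\setminus\mathcal{W}\cdot H_{0}$, that $L\left(wH_{0}\right)\cap T_{x}^{\ast}\mathbb{F}_{H_0}$ is a nonempty affine subspace of real codimension $2$, cut out there by a single $\mathbb{C}$-valued equation involving $\widetilde{H}(x)$, $\widetilde{iH}(x)$ and $f_{H}^{R}(x)$; since these data depend smoothly on $x$, the subspaces assemble into the affine subbundle of item (1) over the regular set $\mathbb{F}_{H_0}\setminus\{uH_{0}:u\in\mathcal{W}\}$.

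It remains to describe $L\left(wH_{0}\right)$ over the singular base points, where the constancy observation does everything. Over $wH_{0}$ itself $f_{H}\equiv f_{H}\left(wH_{0}\right)$ on $\pi^{-1}\left(wH_{0}\right)$, so $\pi^{-1}\left(wH_{0}\right)\subseteq L\left(wH_{0}\right)$, and together with the identification $\pi^{-1}\left(wH_{0}\right)=wH_{0}+\mathfrak{n}^{+}\left(wH_{0}\right)$ this is precisely item (2). Over any other singular point $uH_{0}$ with $u\ne w$ we have $f_{H}\equiv f_{H}\left(uH_{0}\right)$ on $\pi^{-1}\left(uH_{0}\right)$, while $f_{H}\left(uH_{0}\right)\ne f_{H}\left(wH_{0}\right)$ by the general position hypothesis, so $L\left(wH_{0}\right)\cap\pi^{-1}\left(uH_{0}\right)=\emptyset$. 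Combining the three cases, $\pi\left(L\left(wH_{0}\right)\right)=\left(\mathbb{F}_{H_0}\setminus\{uH_{0}:u\in\mathcal{W}\}\right)\cup\{wH_{0}\}$; since the first piece sits over the regular locus and the second is the single fibre $\pi^{-1}\left(wH_{0}\right)$, the two pieces are disjoint and exhaust $L\left(wH_{0}\right)$, which is the claimed decomposition. There is no serious obstacle here — the proof is an assembly of Theorem \ref{teonivelreg} with the structure of the cotangent fibres — and the only step that really needs care is the identification of each $\pi^{-1}\left(uH_{0}\right)$ with the translated nilradical $uH_{0}+\mathfrak{n}^{+}\left(uH_{0}\right)$ \emph{as a subset of $\mathfrak{g}$}, which is exactly what the $N^{\pm}\left(uH_{0}\right)$-diffeomorphisms recalled above supply.
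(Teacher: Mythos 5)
Your proposal is correct and follows essentially the same route as the paper: it examines the singular level fibrewise over $\pi$, re-uses the affine computation over regular base points (which the paper also does, phrased as an ad hoc restatement rather than an explicit citation of Theorem \ref{teonivelreg}), and uses the $N^{+}$-parametrisation of the nilradical slices together with the general-position hypothesis to handle the singular base points $wH_0$ and $uH_0$, $u\neq w$. The only cosmetic difference is that you justify constancy of $f_H$ on $\pi^{-1}(uH_0)$ via orthogonality of $\mathfrak{h}$ to the root spaces, while the paper just drops the term $\langle H,X\rangle$; these are the same fact.
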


The subspace $\mathfrak{n}^{+}\left( wH_{0}\right) $ in the statement is a
nilpotent subalgebra given by 
\begin{equation*}
\mathfrak{n}^{+}\left( wH_{0}\right) =\sum_{\alpha \in \Pi \left(
wH_{0}\right) }\mathfrak{g}_{\alpha }
\end{equation*}%
where $\Pi \left( wH_{0}\right) =\{\alpha \in \Pi :\alpha \left(
H_{0}\right) >0\}$.

\begin{proof}
To prove the proposition we examine the intersection of the level  $%
f_{H}^{-1}\left( f_{H}\left( wH_{0}\right) \right) $ with the fibres of $\pi
\colon\mathcal{O}\left( H_{0}\right) \rightarrow \mathbb{F}_{H_0} $.
Such intersections can be
described as follows:

\begin{enumerate}
\item Let $x\in \mathbb{F}_{H_0}$ be a regular point of $f_{H}$, that is, $%
x\neq uH_{0}$ for all $u\in \mathcal{W}$. Then, the restriction of  $f_{H}$
to the cotangent fibre $\pi ^{-1}\{x\}$ is an affine map, whose linear part
is nonzero. Such linear part is the functional $\left( \widetilde{H},\cdot
\right) _{B}-i\left( \widetilde{iH},\cdot \right) _{B}$, where $\left( \cdot
,\cdot \right) _{B}$ is the Borel metric). If $x\in \mathbb{F}_{H_0}$ is a
regular point, then the linear part has no zeros. This implies that all
levels of $f_{H}$ intersect $\pi ^{-1}\{x\}=T_{x}^{\ast }\mathbb{F}_{H_0}$
on affine subspaces of complex codimension $1$, proving statement (1).

\item Let $N^{+}\left( wH_{0}\right) $ be the connected group with Lie
algebra  $\mathfrak{n}^{+}\left( wH_{0}\right) $. Then, the map 
\begin{equation*}
n\in N^{+}\left( wH_{0}\right) \mapsto \mathrm{Ad}\left( n\right) \left(
wH_{0}\right) -wH_{0}\in \mathfrak{n}^{+}\left( wH_{0}\right)
\end{equation*}%
is a diffeomorphism. In particular, for all $n\in N^{+}\left( wH_{0}\right) $%
, $\mathrm{Ad}\left( n\right) \left( wH_{0}\right) =wH_{0}+X$ with $X\in 
\mathfrak{n}^{+}$. Therefore, 
\begin{equation}
f_{H}\left( \mathrm{Ad}\left( n\right) wH_{0}\right) =\langle
H,wH_{0}+X\rangle =\langle H,wH_{0}\rangle =f_{H}\left( wH_{0}\right) .
\label{forfaga}
\end{equation}%
Hence, the affine subspace $\mathrm{Ad}\left( N^{+}\left( wH_{0}\right)
\right) \left( wH_{0}\right) =\left( wH_{0}\right) +\mathfrak{n}^{+}\left(
wH_{0}\right) $ is contained in the singular level $f_{H}^{-1}\left( \langle
H,wH_{0}\rangle \right) $.

Using the isomorphism $\mathcal{O}\left( H_{0}\right) \approx T^{\ast }%
\mathbb{F}_{H_{0}}$, we see that the fibre over $wH_{0}$ is precisely $%
\left( wH_{0}\right) +\mathfrak{n}^{+}\left( wH_{0}\right) $, proving
statement (2).

\item It remains to verify that if $uH_{0}\neq wH_{0}$ then the fibre $\pi
^{-1}\{uH_{0}\}$ does not intersect the level $f_{H}^{-1}\left( \langle
H,wH_{0}\rangle \right) $. By the same argument as in the previous item, the
fibre $\pi ^{-1}\{uH_{0}\}$ in the adjoint orbit, is given by the adjoint
subspace  $\left( uH_{0}\right) +\mathfrak{n}^{+}\left( uH_{0}\right) $. By
equalities (\ref{forfaga}) $f_{H}$ is constant on this subspace and equals  $%
f_{H}\left( uH_{0}\right) $. Since by hypothesis each singular level
contains just one singularity, this shows that $f_{H}^{-1}\left( \langle
H,wH_{0}\rangle \right) $ does not intersect the fibre over $uH_{0}\neq
wH_{0}$.
\end{enumerate}
\end{proof}

\begin{corollary}\label{cor.sing}
The homology of a singular level $L\left( w H_0\right) $, $w \in \mathcal{W}$ 
coincides with that of \,  
$$\mathbb{F}_{H_0} \setminus  \{uH_0 \in \mathcal{W}\cdot H_{0}: u\neq w\}\text{.}$$
 In particular, the middle  Betti number of $L\left( wH_0 \right)$
equals $k-2$, where $k$ is the number of singularities of the  fibration $f_H.$
\end{corollary}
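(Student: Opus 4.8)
The plan is to deduce Corollary \ref{cor.sing} from Proposition \ref{propfibrasing} in exactly the same way that Corollary \ref{cor.homology} follows from Theorem \ref{teonivelreg}. The point is that a singular level $L(wH_0)$ is assembled, as a disjoint union, from two pieces: an affine subbundle $E$ of $\mathcal{O}(H_0)\to\mathbb{F}_{H_0}\setminus\{uH_0:u\in\mathcal{W}\}$ over the set of regular points of $\mathbb{F}_{H_0}$ (part (1) of the proposition), together with the single affine fibre $\pi^{-1}(wH_0)=wH_0+\mathfrak{n}^+(wH_0)$ over the singular point $wH_0$ itself (part (2)). First I would observe that $E$ is an affine bundle over $\mathbb{F}_{H_0}\setminus(\mathcal{W}\cdot H_0)$ with contractible (affine-space) fibres, hence is homotopy equivalent to its base; so $H_*(E)\cong H_*(\mathbb{F}_{H_0}\setminus\mathcal{W}\cdot H_0)$.

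Next I would reattach the missing fibre. The set $\mathbb{F}_{H_0}\setminus\{uH_0:u\neq w\}$ is $\mathbb{F}_{H_0}$ with $k-1$ of its $k$ marked points removed (where $k=|\mathcal{W}\cdot H_0|$), i.e. it is $(\mathbb{F}_{H_0}\setminus\mathcal{W}\cdot H_0)$ together with the one point $wH_0$ added back. Restricting $\pi$ to the singular level gives a map $L(wH_0)\to\mathbb{F}_{H_0}\setminus\{uH_0:u\neq w\}$ which is an affine bundle (hence a homotopy equivalence onto its image) over the open dense locus $\mathbb{F}_{H_0}\setminus\mathcal{W}\cdot H_0$, and whose fibre over the remaining point $wH_0$ is the contractible affine space $wH_0+\mathfrak{n}^+(wH_0)$. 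So $\pi$ realizes $L(wH_0)$ as a "bundle with contractible fibres" over $\mathbb{F}_{H_0}\setminus\{uH_0:u\neq w\}$; one checks this map is a proper homotopy equivalence — or, more carefully, that it induces isomorphisms on homology — for instance by a Mayer--Vietoris / Leray spectral sequence argument comparing $L(wH_0)$ with its base, the fibres being acyclic throughout. This gives
\[
H_*\bigl(L(wH_0)\bigr)\;\cong\;H_*\bigl(\mathbb{F}_{H_0}\setminus\{uH_0\in\mathcal{W}\cdot H_0:u\neq w\}\bigr),
\]
which is the first assertion.

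For the Betti number count I would then compute the homology of $\mathbb{F}_{H_0}$ with $k-1$ points deleted. For a closed oriented manifold $\mathbb{F}_{H_0}$ of (real) dimension $2m$, removing one point does not change homology below the top degree and kills $H_{2m}$; removing $j$ points replaces the top class by $j-1$ new classes in degree $2m-1$, by Alexander duality or a direct Mayer--Vietoris computation near the punctures. With $j=k-1$ punctures one gets $k-2$ extra classes in the relevant degree, i.e. the middle Betti number of $L(wH_0)$ is $k-2$, exactly one less than the middle Betti number $k-1$ of a regular level computed in Corollary \ref{cor.homology} (consistent with the fact that a regular level is $\mathbb{F}_{H_0}$ with all $k$ points removed).

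The main obstacle is the middle step: making precise the claim that $\pi$ restricted to $L(wH_0)$ is a homology equivalence onto $\mathbb{F}_{H_0}\setminus\{uH_0:u\neq w\}$ even though the bundle is not locally trivial across the special point $wH_0$ — the fibre dimension is constant ($\mathfrak{n}^+(wH_0)$ has the same dimension as a generic codimension-$1$ affine subspace of a cotangent fibre, by the regularity of $wH_0$), but one must still justify that no homology is lost in passing to the singular fibre. The cleanest route is a Leray spectral sequence for $\pi|_{L(wH_0)}$ with the (constant, acyclic) fibre homology, which collapses to give the isomorphism; alternatively one can stratify $\mathbb{F}_{H_0}\setminus\{uH_0:u\neq w\}$ into the open regular part and the point $wH_0$, apply Corollary \ref{cor.homology}-type reasoning plus excision near $wH_0$, and glue. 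Everything else is the routine punctured-manifold homology computation.
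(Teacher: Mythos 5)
Your overall strategy is exactly the one the paper intends: deduce Corollary \ref{cor.sing} from Proposition \ref{propfibrasing} by observing that $\pi$ restricted to the singular level surjects onto $\mathbb{F}_{H_0}\setminus\{uH_0:u\neq w\}$ with contractible affine fibres, so the homology agrees with that of the base, and then do the standard punctured-manifold Betti number count. Your bookkeeping at the end is correct, and is consistent with Corollary \ref{cor.homology}: the regular level is $\mathbb{F}_{H_0}$ minus $k$ points, the singular level is $\mathbb{F}_{H_0}$ minus $k-1$, so the middle Betti number drops from $k-1$ to $k-2$.

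One concrete slip: the parenthetical claim that the fibre dimension of $\pi|_{L(wH_0)}$ is constant is false. By Proposition \ref{propfibrasing}(2) the fibre over the singular base point $wH_0$ is the \emph{entire} cotangent fibre $\pi^{-1}(wH_0)=wH_0+\mathfrak{n}^{+}(wH_0)$, of complex dimension $\dim_{\mathbb C}\mathbb{F}_{H_0}$, while over a regular base point the fibre is a complex-codimension-one affine subspace of $T^{\ast}_x\mathbb{F}_{H_0}$, of complex dimension $\dim_{\mathbb C}\mathbb{F}_{H_0}-1$. So the fibre dimension jumps by one at $wH_0$; $\dim_{\mathbb C}\mathfrak{n}^{+}(wH_0)$ is \emph{not} equal to the generic fibre dimension. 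This does not sink the argument --- each fibre is still a nonempty affine space, hence contractible, which is all the Leray or Mayer--Vietoris comparison needs --- but it does mean that near $wH_0$ the map is even less like a locally trivial bundle than you suggest, and the acyclicity of $\pi^{-1}(U)\cap L(wH_0)$ for a small ball $U\ni wH_0$ (equivalently, the stalk computation of $R^{q}\pi_{*}$ at $wH_0$, or the third leg of your Mayer--Vietoris comparison) is the one step that genuinely requires an argument rather than an appeal to local triviality. You flag this honestly; the paper does not spell it out either, so your sketch is at the same level of detail as the source --- just correct the dimension count.
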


\begin{example}
In the case of $\mathrm{Sl}\left( 2,\mathbb{C}\right) $ the singular fibres
are just the union of 2 subspaces. In this case the affine bundle has rank 0
and each fibre of this bundle intersects  $H_{0}+\mathfrak{n}^{-}\left(
H_{0}\right) $ as well as $\left( w_{0}H_{0}\right) +\mathfrak{n}^{-}\left(
w_{0}H_{0}\right) $ with $w_{0}H_{0}=-H_{0}$. We conclude that this subbundle
is contained in the affine spaces $H_{0}+\mathfrak{n}^{-}\left( H_{0}\right) 
$ and $\left( w_{0}H_{0}\right) +\mathfrak{n}^{-}\left( w_{0}H_{0}\right) $
which are part of the singular levels of $H_{0}$ and $w_{0}H_{0}=-H_{0}$,
respectively.
\end{example}


\begin{thebibliography}{AAEKO}
%
%

\bibitem[ABKP]{ABKP} Amor\'{o}s, J.\thinspace ; Bogomolov, F.\thinspace ;
Katzarkov, L.\thinspace ; Pantev, T.\thinspace ; \textit{Symplectic
Lefschetz fibrations with arbitrary fundamental groups}, J. Differential
Geom. \textbf{54} (2000), no. 3, 489--545.

\bibitem[At]{at} Atiyah,M.; Convexity and commuting hamiltonians, Bull.
London Math. Soc. \textbf{14} (1982), 1--15.



%


\bibitem[ABB]{ABB} Azad, H.; van den Ban, E.; Biswas, I.; \textit{Symplectic
geometry of semisimple orbits.} Indag. Mathem. , N. S. \textbf{19} (4)
(2008), 507--533.




\bibitem[Do]{Do} Donaldson, S. K\thinspace .; \textit{Lefschetz fibrations
in symplectic geometry}, Proceedings of the International Congress of
Mathematicians, Vol. II (Berlin, 1998), Doc. Math. Extra Vol. II (1998),
309--314.

\bibitem[DKV]{dkv} Duistermaat, J. J.; Kolk J. A. C.; Varadarajan, V. S.; 
\textit{Functions, flows and oscillatory integrals on flag manifolds and
conjugacy classes in real semisimple Lie groups}. Compositio Math., \textbf{%
49} (1983), 309--398.



%
%

\bibitem[GGS]{GGS} Gasparim, E.\thinspace ; Grama, L.\thinspace ; San
Martin, L. A. B.\thinspace ; \textit{Adjoint orbits of semisimple Lie
groups and Lagrangian submanifolds}, arXiv:1401.2418.

\bibitem[Go]{Go} Gompf, R. E.; \textit{Symplectic structures from Lefschetz
pencils in high dimensions}, Geometry \& Topology Monographs \textbf{7}:
Proceedings of the Casson Fest (2004) 267--290.

\bibitem[GoS]{GoS} Gompf, R.\thinspace ; Stipsicz, A.\thinspace ; \textit{An
introduction to 4-manifolds and Kirby calculus}, Graduate Studies in
Mathematics \textbf{20}, American Math. Society, Providence (1999).

\bibitem[GS]{gs} Guillemin, V.\thinspace; Sternberg, S.\thinspace; \textit{ Convexity properties of the
moment mapping}, Invent. Math. \textbf{67} (1982), 491--513.

%
%

\bibitem[Kc]{koch} Kocherlakota, R.R.; Integral Homology of real flag
manifolds and loop spaces of symmetric spaces. Adv. Math \textbf{110}
(1995), 1--46.


\bibitem[K]{kos} Kostant, B., On convexity, the Weyl group and the Iwasawa
decomposition, Ann. Sci. Ecole Norm. Sup. \textbf{6} (1973), 413--455.

%

\bibitem[SM]{amalglie} San Martin, L.A.B.; \textit{\'{A}lgebras de Lie},
segunda edi\c{c}\~{a}o, editora Unicamp (2010).


\bibitem[Se]{Se} Seidel, P.\thinspace ; \textit{Vanishing cycles and
mutations}, European Congress of Mathematics, Vol. II (Barcelona, 2000),
65--85, Progr. Math. \textbf{202}, Birkh\"{a}user, Basel (2001).

%
%
\end{thebibliography}
\end{document}